\newcommand{\mcm}[3]{\newcommand{#1}[#2]{{\ensuremath{#3}}}} 
\mcm{\tuple}{1}{\langle #1 \rangle}
\mcm{\name}{1}{\ulcorner #1 \urcorner}
\mcm{\Nbb}{0}{\mathbb{N}}
\mcm{\Zbb}{0}{\mathbb{Z}}
\mcm{\Rbb}{0}{\mathbb{R}}
\mcm{\Cbb}{0}{\mathbb{C}}
\mcm{\Qbb}{0}{\mathbb{Q}}
\mcm{\Acal}{0}{\cal A}
\mcm{\Bcal}{0}{\cal B}
\mcm{\Ccal}{0}{\cal C}
\mcm{\Dcal}{0}{\cal D}
\mcm{\Ecal}{0}{\cal E}
\mcm{\Fcal}{0}{\cal F}
\mcm{\Gcal}{0}{\cal G}
\mcm{\Hcal}{0}{\cal H}
\mcm{\Ical}{0}{\cal I}
\mcm{\Jcal}{0}{\cal J}
\mcm{\Kcal}{0}{\cal K}
\mcm{\Lcal}{0}{\cal L}
\mcm{\Mcal}{0}{\cal M}
\mcm{\Ncal}{0}{\cal N}
\mcm{\Ocal}{0}{{\cal O}}
\mcm{\Pcal}{0}{{\cal P}}
\mcm{\Qcal}{0}{{\cal Q}}
\mcm{\Rcal}{0}{{\cal R}}
\mcm{\Scal}{0}{{\cal S}}
\mcm{\Tcal}{0}{{\cal T}}
\mcm{\Ucal}{0}{{\cal U}}
\mcm{\Vcal}{0}{{\cal V}}
\mcm{\Wcal}{0}{{\cal W}}
\mcm{\Xcal}{0}{{\cal X}}
\mcm{\Ycal}{0}{{\cal Y}}
\mcm{\Mfrak}{0}{\mathfrak M}
\mcm{\restric}{0}{\upharpoonright}
\mcm{\upset}{0}{\uparrow}
\mcm{\onto}{0}{\twoheadrightarrow}
\mcm{\smallNbb}{0}{{\small \mathbb{N}}}
\DeclareMathOperator{\preop}{op}
\mcm{\op}{0}{^{\preop}}
\newcommand{\theoremize}[2]{\newaliascnt{#1}{thm} \newtheorem{#1}[#1]{#2} \aliascntresetthe{#1}}
\theoremstyle{plain}
\newtheorem{thm}{Theorem}[section]
\theoremstyle{definition}
\theoremstyle{plain}
\title{\scshape Large highly connected subgraphs in graphs with linear average degree}
\author{Johannes Carmesin}
\newcommand{\sm}{\setminus}
\newcommand{\astr}{abstract separator-tree}
\begin{document}

\maketitle

\begin{abstract}
 In 1972 Mader proved that every graph with average degree at least $4k$ has a 
$(k+1)$-connected subgraph with more than $2k$ vertices. We improve this bound by showing that the 
constant $4$ can be replaced by $3+\frac{1}{3}$; this bound is sharp. 
\end{abstract}

\section{Introduction}

How can we find a highly connected substructure of a graph?
This is a fundamental question of Graph Theory that has many exciting answers depending on in which 
context we interpret `highly connected'?
Examples include Turan's theorem \cite{MR18405}, the tangle-tree theorem of Robertson and 
Seymour of their 
Graph 
Minor Theory \cite{GMX} or the following theorem of Mader.

In 1972, Mader proved that every graph\footnote{In this paper all graphs are finite. } with 
average degree at least $4k$ has a $(k+1)$-connected subgraph with more than $2k$ vertices 
\cite{Mader1972}, see also the textbook \cite{DiestelBookCurrent}[Chapter 1].
Seven years later, he conjectured that already average 
degree $3k$ should 
be enough to force a $(k+1)$-connected subgraph for large enough graphs\footnote{Actually he made 
a stronger conjecture proposing an explicit family of graphs to be extremal for large 
vertex numbers, see \autoref{concl_rem}.} \cite{Mader_survey}. 
Already in \cite{Mader1972} Mader proved a bound that is better than $4k$ for large graphs. Later 
this was in improved by Yuster \cite{Yuster_maderConj}. The current 
record is hold by 
 Bernshteyn and Kostochka \cite{Bernshteyn_Kostochka_maderConj} who proved the weakening with the 
bound 
$(3+\frac{1}{6})k$. 
Mader's theorem is applied in various contexts throughout Graph Theory, for example 
\cite{BOHME2009557} or 
\cite{THOMASSEN1983129};
more details can be found in Kriesell's survey  on connectivity 
questions in 
graphs \cite{Kriesell_minimalConny}. 

\vspace{.3cm}

Here we improve Mader's original theorem by replacing the constant $4$ by the optimal one. 

\begin{thm}\label{main-intro}
Every graph with average degree at least $(3+\frac{1}{3})k$ has a 
$(k+1)$-connected subgraph with more than $2k$ vertices.
\end{thm}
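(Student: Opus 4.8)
The plan is to prove the contrapositive: if $G$ has no $(k+1)$-connected subgraph on more than $2k$ vertices then $|E(G)|<\tfrac{5k}{3}\,|V(G)|$, i.e.\ the average degree of $G$ is strictly below $(3+\tfrac13)k$. Suppose not and pick a counterexample $G$ with $|V(G)|$ as small as possible. Deleting a vertex of degree at most $\tfrac{5k}{3}$ would leave a smaller counterexample, so $\delta(G)>\tfrac{5k}{3}$; since some component of $G$ is at least as dense as $G$, we may assume $G$ is connected; and since $|V(G)|\le 2k$ would force average degree at most $2k-1<(3+\tfrac13)k$, we have $|V(G)|\ge 2k+1$. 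In particular $G$ is not itself $(k+1)$-connected, as it would then be a forbidden subgraph, so there is a set $S\subseteq V(G)$ with $|S|\le k$ whose deletion disconnects $G$. A single cut at $S$ is not enough on its own, since the edges meeting $S$ are shared between the two sides and so neither side need be as dense as $G$; one has to iterate the cutting in a disciplined way.

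The device for this is an abstract separator-tree of $G$: a tree-decomposition of $G$ of adhesion at most $k$, obtained by recursively cutting along separators of order at most $k$, whose parts are each either of bounded size or $(k+1)$-connected; because $G$ has no forbidden subgraph and these parts are induced subgraphs of $G$, a $(k+1)$-connected part has at most $2k$ vertices. Using the identity $|E(G)|=\sum_t|E(G[V_t])|-\sum_{tt'}|E(G[V_t\cap V_{t'}])|$ over this tree, it suffices to prove a boundary-weighted estimate for the parts: for every graph $H$ without a $(k+1)$-connected subgraph on more than $2k$ vertices and every prescribed adhesion set $X\subseteq V(H)$ with $|X|\le k$, one has $|E(H)|\le\tfrac{5k}{3}\bigl(|V(H)|-|X|\bigr)+\psi(|X|)$ for a fixed function $\psi$ comparable to $\binom{|X|}{2}$. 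This is proved by induction on $|V(H)|$: the base cases are the pieces that cannot be cut, namely the $(k+1)$-connected $H$ — then $|V(H)|\le 2k$, so the bound follows from $|E(H)|\le\binom{|V(H)|}{2}$ after a short check over the finitely many sizes — and the $H$ on at most $k+1$ vertices; the inductive step picks a separator of order at most $k$ that respects $X$, applies the hypothesis to the two strictly smaller, still forbidden-subgraph-free sides with their induced boundaries, and recombines via the same edge identity. Summing the resulting estimates along the separator-tree of $G$ yields $|E(G)|<\tfrac{5k}{3}|V(G)|$, the desired contradiction.

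The heart of the argument — and the reason the constant comes out $3+\tfrac13$ rather than Mader's $4$ — is calibrating $\psi$ and the slack in the induction against two opposing pressures. On one hand the inequality must be loose enough to survive a recombination that, whenever a separator of order $k$ is used, costs an additive $\tfrac{5k}{3}\cdot k$; on the other it must be tight enough to hold for the extremal gadget, a copy of $K_{2k}$ hung from the rest along a clique $K_k$, which contributes exactly $\tfrac{3k-1}{2}$ edges per new vertex, so that long chains of such gadgets climb to average degree $3k-1$. Reconciling these, together with the finitely many small base graphs on at most $2k$ vertices, is what lets the slope $\tfrac{5k}{3}$ go through, and the matching construction in the paper shows no smaller slope can, so $3+\tfrac13$ is sharp. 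I expect the main obstacle to be the structural input to the inductive step: showing that a graph $H$ with a prescribed boundary $X$ of size at most $k$ and no $(k+1)$-connected subgraph on more than $2k$ vertices is either a base case or has a separator of order at most $k$ respecting $X$ that splits it into smaller such graphs — equivalently, that the abstract separator-tree can always be built with parts of exactly the controlled shape that the edge count needs; the remainder is a weighted summation over the tree.
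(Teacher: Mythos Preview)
The structural step you flag as the main obstacle---building the separator-tree---is in fact routine (it is a two-line recursion, \autoref{sep--tr_exist} in the paper). The genuine difficulty is the edge count, and the inductive invariant you propose,
\[
|E(H)|\ \le\ \tfrac{5k}{3}\bigl(|V(H)|-|X|\bigr)+\psi(|X|),
\]
cannot be carried through the induction with $\psi$ a function of $|X|$ alone. The base case $H=K_{2k}$, $|X|=k$ forces $\psi(k)\ge\binom{2k}{2}-\tfrac{5k}{3}\cdot k=\tfrac{k^2}{3}-k>0$ for $k\ge 4$. But if a separator $S$ of size $k$ spans no edges---and the paper's extremal graphs contain exactly such cuts---then cutting at $S$ and applying the hypothesis to both sides with boundary $S$ yields $|E(H)|\le\tfrac{5k}{3}(|V(H)|-k)+2\psi(k)-0$, so recovering the invariant with outer boundary also of size $k$ forces $\psi(k)\le 0$. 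No choice of $\psi$ satisfies both constraints. This is precisely why a purely local argument stalls short of $\tfrac{10}{3}k$: the subtraction $-|E(H[S])|$ has no useful lower bound, and how sparse $S$ can be is not determined at the cut itself but by how the vertices of $S$ scatter among the atoms deeper in the tree.

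The paper's device is to make this dependence explicit. Each non-edge of a separator is classified as \emph{free} (its endpoints land in distinct atoms as one descends) or \emph{atomic} (both end in one atom); the number of free non-edges at $S$ is then bounded by the Tur\'an-type estimate $\tfrac{s^2}{2}\bigl(1-\tfrac{1}{L}\bigr)$, where $L$ counts the atoms below the small side of $S$, while atomic non-edges are charged against that atom's defect. The invariant that \emph{does} propagate is $e\le\beta(n-k)+\tfrac{\gamma}{L}-\epsilon$ with $\gamma=\tfrac{k^2}{3}$, and the $\tfrac{\gamma}{L}$ term is exactly what absorbs the doubling that wrecks your scheme: since $L$ at least doubles on each side of a cut, $\tfrac{\gamma}{L_A}+\tfrac{\gamma}{L_B}$ trades against $\tfrac{\gamma}{L_P}$ plus a remainder that the free-non-edge bound cancels. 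This global bookkeeping of separator sparsity---absent from your outline---is the new idea that moves the constant from $4$ to $3+\tfrac{1}{3}$.
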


The constant $3+\frac{1}{3}$ in this theorem is best possible, see \autoref{extremal} for 
details and a discussion on the lower order terms. 

The edge-version of this question is somewhat different, see \cite{MR291004}.

Our proof can also be used to prove sharp versions of \autoref{main-intro} for $(k+1)$-connected 
subgraph with more than $m$ vertices, where $m\geq 2k$.

\vspace{.3cm}

Mader's original argument makes use of a formula that relates the number of edges of a graph to the 
edges of two subgraphs obtained by cutting the graph at a separator. This formula also takes into 
account the edges in that separator. Mader's argument is suboptimal in that it does not analyse how 
many edges are in the separator. In our argument we make a more global analysis than Mader 
that allows us to estimate these edges, which in return gives the sharp bound.

The first step in our proof is to show that any graph without a $(k+1)$-connected 
subgraph of more than $2k$ vertices has a tree-like decomposition, which we call `separator-trees'. 
A key step is then to forget that we have a graph and just work in this more abstract 
framework. These abstract separator-trees have the advantage that they carry less information which 
enables us to show that in the `extremal case' they have a much simpler structure -- yet they 
attain the same bound.

\section{Separator-trees}\label{sec:sep-tr}

In this section we construct for every graph $G$ without a $(k+1)$-connected subgraph of more than 
$2k$ vertices a `separator-tree'. In this section we begin to analyse these tree-like structures in 
order to obtain an upper bound on the number of edges of the graph $G$; this is continued in the 
following sections.

Given a natural number $k$, a \emph{separator-tree} of a graph $G$ (of 
adhesion $k$) consists of a bipartite tree $T$ with the following properties, see 
\autoref{fig_sepr_tree}.
The nodes\footnote{In this paper we follow the convention that if a graph is a tree we refer to its 
vertices are `nodes'.} in the first bipartition class are called the \emph{parts} and labelled by 
induced 
subgraphs of $G$. All these nodes have degree one or two (except in the degenerated case, see 
below). One of these nodes has degree-one and is labelled 
by $G$. This part is referred to as the \emph{root}. All other leaves are called \emph{atoms}. 
In the degenerated case, the one where $T$ has only one node, that node is the root and atomic. 
We endow the tree $T$ by a tree-order in which the root is the largest element. 
The nodes in the second partition class are called the \emph{separator-nodes}. Each separator 
node has degree three. Each separator-node $t$ is labelled by a separator $S$ of at most $k$ 
vertices of the part $P$ directly above the node $t$. The two parts $A$ and $B$ directly below $t$ 
are sides 
of a separation $(A,B)$ of $P$ with separator $S$; in formulas: $A\cup 
B=P$, $A\cap B=S$ and no edge joins $A\sm 
S$ and $B\sm S$. 
We further require that all graphs assigned to atoms have at most $2k$ vertices. In a slight abuse 
of notation, we will not distinguish between the atoms and the graphs assigned to them and say 
things `the vertices of an atom'.

   \begin{figure} [htpb]   
\begin{center}
   	  \includegraphics[height=4cm]{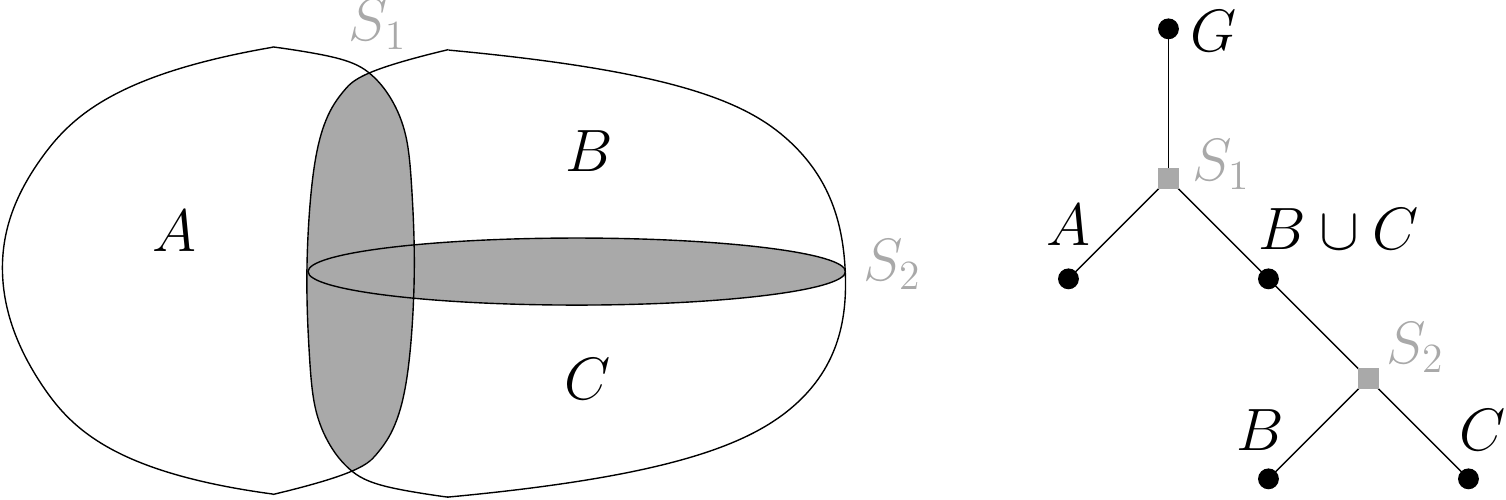}
   	  \caption{On the left we sketched a graph $G$ recursively separated 
by the separators $S_1$ and $S_2$, highlighted in grey. The corresponding 
separator-tree is depicted on the right. The three atoms are labelled by $A$, $B$ and $C$. This 
decomposition is not a tree-decomposition as the separator $S_2$ is not a separator of the graph 
$G$ but just of the subgraph $B\cup C$. }\label{fig_sepr_tree}
\end{center}
   \end{figure}

\begin{lem}\label{sep--tr_exist}
Any graph $G$ that has no $(k+1)$-connected subgraph with more than $2k$ vertices has a 
separator-tree of adhesion $k$. 
\end{lem}

\begin{proof}
We construct the separator-tree recursively. 
If $G$ has at most $2k$ vertices, we take the trivial separator-tree consisting only of one node.
Otherwise, by assumption $G$ is not $(k+1)$-connected. So it has 
a separation $(A,B)$ of order at most $k$ such that $A\sm B$ and $B\sm A$ are nonempty.
By recursion the subgraphs $G[A]$ and $G[B]$ have separator-trees with the desired properties.
We construct a separator-tree for $G$ as follows. We take the disjoint union of the two 
separator-trees for $G[A]$ and $G[B]$ and add a separator node adjacent to the two roots of these 
trees and the third neighbour gets the root of this separator-tree. The assignments of graphs to 
parts are the same as in the subtrees and we assign the graph $G$ to the root. The new separator 
node is labelled by the separator $A\cap B$. 
It is straightforward to check that this is a separator-tree.
\end{proof}

The \emph{branch} of a separator-tree $T$ of a part $P$ is the subtree of $T$ consisting of $P$ and 
all nodes in $T$ below $P$. In particular, it is also a separator-tree and all its atoms 
are also atoms of $T$. The \emph{branches at a separator} $S$ are the branches of the two parts 
directly below $S$ in the separator-tree. 

A \emph{valuation} of a separator-tree assigns to each separator a \emph{big branch}; the other 
branch at that separator is called the \emph{small branch}. The estimates below will depend on a 
choice of valuation. Throughout we assume that every separator-tree is endowed with a valuation, 
and say just things like `the big branch of a separator-tree'. 

Our aim will be to upper-bound the number of edges of any graph with the a separator-tree. We shall 
prove 
that bound recursively using the tree-structure of the separator-tree.
In the induction step we will rely on the following identity connecting the numbers of edges of a 
graph $G$ and the sides $A$ and $B$ of a separation $(A,B)$ with separator $S$, see 
\autoref{fig:separation}.

\begin{equation}\label{separation_edges}
 e(G)= e(A)+e(B)-e(S)
\end{equation}

   \begin{figure} [htpb]   
\begin{center}
   	  \includegraphics[height=2cm]{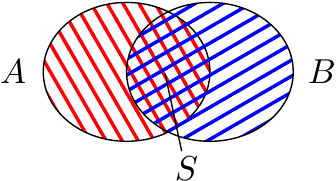}
   	  \caption{The separation $(A,B)$ with separator $S$. The edge number of the 
graph $G$ is that of the side $A$ plus that of the side $B$ minus the edges that are 
double counted; this are those in the separator $S$. }\label{fig:separation}
\end{center}
   \end{figure}
As mentioned above, it is key to our analysis to look carefully at the edges in the separators 
of the separator-tree. In fact, we find it slightly easier to analyse the edges that are not 
present: an \emph{anti-edge} of a graph  is an 
edge 
of its complement.

Given a separator $S$ of a separator-tree $T$ and a part $A$ directly below $S$ in 
$T$, and an anti-edge $x$ in $S$, we will recursively define when $x$ is `free' or `atomic' in $A$ 
at $S$.
In each recursion step, we have a part $A'$ containing the anti-edge $x$. We start with $A'=A$.
\begin{enumerate}
 \item If $A'$ is an atom, then $x$ is \emph{atomic} in $A$ within the atom $A'$;
 otherwise there is a unique separator node directly below $A'$ in $T$. We denote its separator by 
$S''$ and 
the two parts directly below by $A''$ and $B''$.
 \item If $x$ is an anti-edge between $A''\sm S''$ and $B''\sm S''$, we say that $x$ is \emph{free} 
in $A$.
\item If $x$ is an anti-edge of precisely one of the two parts $A''$ and $B''$, we iterate with 
that part and $x$.
\item Otherwise $x$ is an anti-edge of both $A''$ and $B''$. So $x$ is an anti-edge of $S''$. We 
iterate with the big branch of $S''$ and $x$. 
\end{enumerate}
This recursion clearly stops after finite time and it output is either that `$x$ is free in $A$ at 
$S$' or 
an atom $A'$ below $A$ such that  `$x$ is atomic in $A$ within the atom $A'$ at $S$', see 
\autoref{fig:atomic_free}.

   \begin{figure} [htpb]   
\begin{center}
   	  \includegraphics[height=6cm]{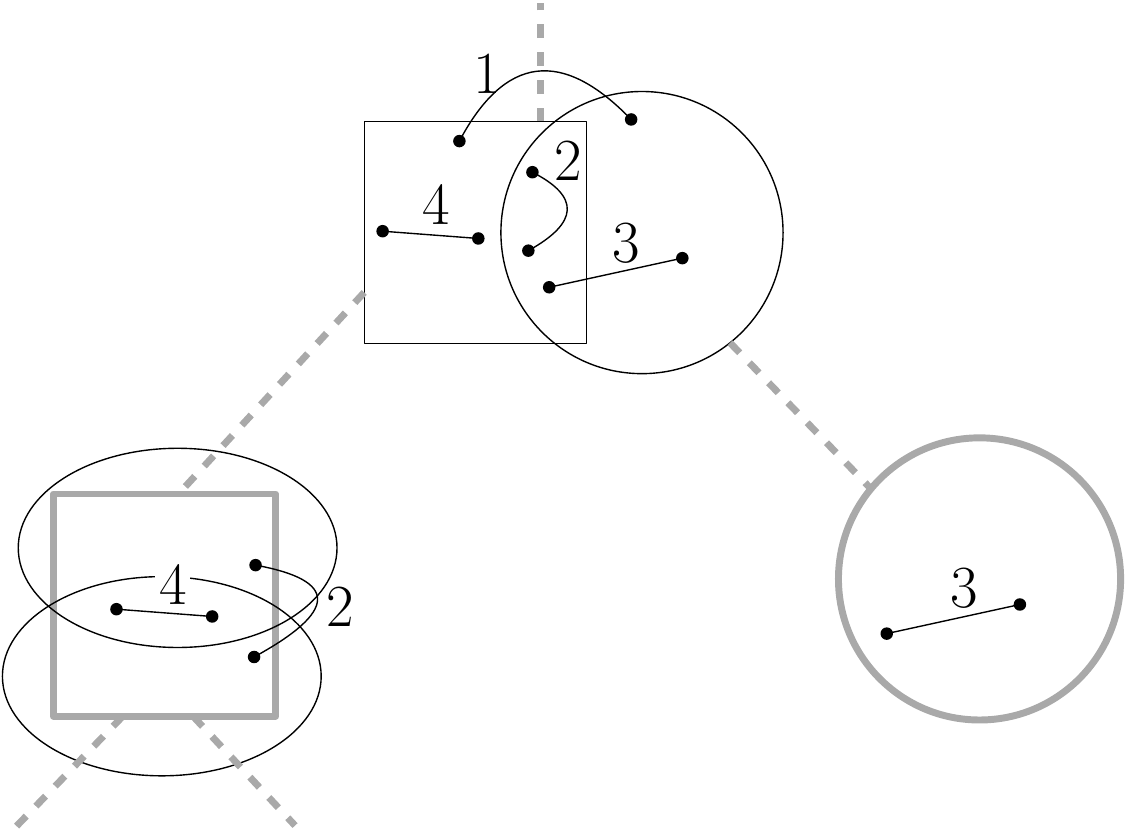}
   	  \caption{An image of a separator-tree. At the separator nodes we 
indicated the corresponding separations. At the top separator the left 
branch is the big one, the right branch only consists of an atom. The anti-edges labelled 1 or 2 
are free. The one labelled 3 is atomic. In order to find out what the anti-edge 4 is we have 
to look into the big branch of the bottom left separator. }\label{fig:atomic_free}
\end{center}
   \end{figure}
   
In the estimates in the proof of \autoref{main-intro} we will treat free anti-edges and atomic 
anti-edges differently. The existence of free anti-edges we have to accept but we can upper-bound 
their number using \autoref{septr_to_abstr} below. The atomic anti-edges can be used to improve 
our bounds. 
How much they improve the bound is measured in the `atomic defect' defined next. 

For that we define when an anti-edge $x$ is \emph{atomic within a branch $A$} (without reference to 
a particular separator). It will not be atomic in $A$ if there is no separator $S$ above $A$ 
containing $x$. If there is such a separator, let $S'$ be the one nearest to $A$, and let $A'$ be 
the branch of $S'$ containing $A$. We say that $x$ is \emph{atomic in $A$} if it is atomic in $A'$ 
at $S'$.  

The \emph{atomic defect} at a branch $A$ is the number of anti-edges that are atomic in the branch 
$A$; we denote this number by $\alpha(A)$. We stress that we count each anti-edge of $A$ at most 
once (although it may be in several atoms). 
By $\alpha(A,S)$ we denote the number of anti-edges that are atomic in the branch 
$A$ and have both end-vertices in $S$. 
\begin{eg}
 The atomic defect $\alpha(G)$ of the whole graph is zero. 
\end{eg}

\begin{lem}\label{alpha}
Let $P$ be a part and $S$ be the separator just below $P$ of the separator-tree $T$ and $A$ 
and 
$B$ be the two branches at $S$. Assume that $A$ is the small branch. Then: 
 \[
  \alpha(P)\leq \alpha(B)+\alpha(A)-\alpha(A,S)
 \]
\end{lem}

 \begin{proof}
Let $x$ be an anti-edge counted in the atomic defect $\alpha(P)$; that is, an atomic anti-edge of 
$P$. Then as $x$ is not free, it is an anti-edge of at least one of the subgraphs $A$ or $B$. 

If $x$ is an anti-edge of $B$, then it must be atomic in $B$ by the definition 
of `atomic'. If $x$ is, additionally to $B$, also an anti-edge of $A$, it is in the separator $S$. 
Thus it is counted in the atomic defect $\alpha(A)$ if and only if it is counted in $\alpha(A,S)$. 
To summarise, all anti-edges of $B$ that are counted on the left hand side, are also counted once 
in total on the right hand side.

It remains to consider anti-edges $x$ of $A$ without both endvertices in $S$. They are 
counted in the atomic defect $\alpha(P)$ if and only if they are counted in $\alpha(A)$. Hence 
every anti-edge counted on the left hand side of the inequality of \autoref{alpha} is counted once 
on the right hand side of this inequality. 
 \end{proof}
 
 \begin{rem}
  We remark that the 
inequality in \autoref{alpha} need not be an equality as the 
right hand side may additionally count some anti-edges of $S$ that are atomic in $B$ but not 
in $P$ as there is no separator above $P$ containing them.
 \end{rem}
 
Given a separator-tree $T$, its \emph{framework}, denoted by $\widehat T$, consists of the tree of 
$T$ 
and  it assigns a number 
$f(S)$ to every separator $S$ of $T$; we require that this number is always at least the number of 
free anti-edges at $S$ in the small branch of $S$. Then we use this to assign to each part-node 
$\widehat P$ of a part $P$ of $T$ an \emph{edge number} $e( \widehat P)$ as follows. If $P$ is an 
atom with $m$ vertices, we 
assign to it the number of edges of the complete graph on $m$ vertices. 
Then we use the following equation to define recursively the other edge numbers:

\begin{equation}\label{abstract_recursion}
 e(\widehat P)= e(\widehat A) + e(\widehat B)-\frac{s(s-1)}{2}+ f(S);
\end{equation}

here $s$ denotes the number of vertices of the separator $S$. 
Given a separator-tree $T$ with a framework $\widehat T$ and a part $P$ of $T$, by $\widehat P$ we 
denote the node $P$ considered as a node of (the tree of) the framework $\widehat T$. The following 
theorem compares the edge number of a part $P$ of a separator-tree $T$ with that of a part 
$\widehat P$ of a framework $\widehat T$ of $T$. 

\begin{thm}\label{baby-case}
 Let $G$ be a graph that has a separator-tree $T$ with a framework $\widehat T$. The number 
$e(P)$ of edges of a part $P$ is upper-bounded by
\[
 e(P)\leq e(\widehat P) - \alpha(P)
\]
\end{thm}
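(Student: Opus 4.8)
The plan is to prove the inequality $e(P)\le e(\widehat P)-\alpha(P)$ by induction on the size of the branch of $P$ (equivalently, on the number of nodes below $P$ in $T$), using the two edge identities \eqref{separation_edges} and \eqref{abstract_recursion} to reduce the claim at $P$ to the claims at the two parts $A$ and $B$ directly below the separator $S$ beneath $P$, and then using \autoref{alpha} to keep the atomic-defect term under control.

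For the base case, $P$ is an atom, so $P$ has at most $2k$ vertices and $e(P)$ is at most the number of edges of the complete graph on $|P|$ vertices, which is exactly $e(\widehat P)$ by definition of the framework's edge number on atoms. Since $\alpha(P)=0$ on an atom (an anti-edge of an atom $A'$ is counted in $\alpha$ only when it lies in some separator above, which is not the case when we take $P=A'$ itself with no separator above it in its own branch), the inequality holds — indeed with room to spare equal to the number of anti-edges of $P$. For the inductive step, let $S$ be the separator just below $P$, with $s$ vertices, and let $A,B$ be the parts directly below $S$, with $A$ the small branch. By \eqref{separation_edges}, $e(P)=e(A)+e(B)-e(S)$. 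Applying the inductive hypothesis to $A$ and to $B$ gives $e(A)\le e(\widehat A)-\alpha(A)$ and $e(B)\le e(\widehat B)-\alpha(B)$. Substituting and then using \eqref{abstract_recursion} in the form $e(\widehat A)+e(\widehat B)=e(\widehat P)+\tfrac{s(s-1)}{2}-f(S)$, we get
\[
 e(P)\le e(\widehat P)+\frac{s(s-1)}{2}-f(S)-\alpha(A)-\alpha(B)-e(S).
\]
Now $\tfrac{s(s-1)}{2}-e(S)$ is exactly the number of anti-edges of the separator $S$ (viewed as an induced subgraph of $G$), so the right-hand side is $e(\widehat P)-f(S)-\alpha(A)-\alpha(B)+(\text{anti-edges of }S)$. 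Each anti-edge $x$ of $S$ is, within the small branch $A$, either free at $S$ or atomic within $A$ with both endpoints in $S$; the free ones number at most $f(S)$ by the defining requirement on a framework, and the atomic ones number exactly $\alpha(A,S)$. Hence $(\text{anti-edges of }S)\le f(S)+\alpha(A,S)$, which yields $e(P)\le e(\widehat P)-\alpha(A)-\alpha(B)+\alpha(A,S)\le e(\widehat P)-\alpha(P)$ by \autoref{alpha}. This closes the induction.

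The main obstacle, and the place requiring care, is the bookkeeping of the anti-edges of $S$: one must check that every anti-edge of $S$ really does fall into exactly one of the two categories (free in $A$ at $S$, or atomic in $A$ at $S$ with both endpoints in $S$) when we run the recursion starting from $A'=A$ — this is precisely the dichotomy guaranteed by the recursion defining `free'/`atomic', since an anti-edge of $S$ is in particular an anti-edge of $A$, so the recursion can be started at $A$, and its output is one of these two options, the atomic one necessarily having both endpoints in $S$ because $x\subseteq S$. A secondary subtlety is making sure the recursive call is legitimate for both $A$ and $B$: each of them is a branch of $T$ that is itself a separator-tree with the restricted framework, and each has strictly fewer nodes below it than $P$ does, so the induction is well-founded. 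One should also note explicitly that $f$ restricted to the separators of the branch $A$ (resp.\ $B$) still satisfies the framework requirement, since that requirement is local to each separator, so $\widehat A$ and $\widehat B$ are genuine frameworks of the branches $A$ and $B$ and the inductive hypothesis applies verbatim.
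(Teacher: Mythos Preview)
Your inductive step is correct and is exactly the paper's argument: apply \eqref{separation_edges}, the induction hypothesis, rewrite via \eqref{abstract_recursion}, bound the anti-edges of $S$ by $f(S)+\alpha(A,S)$, and finish with \autoref{alpha}.

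The base case, however, contains a genuine error. You claim $\alpha(P)=0$ for an atom $P$ on the grounds that ``there is no separator above it in its own branch''. But $\alpha(P)$ is defined relative to the full separator-tree $T$, not relative to the branch rooted at $P$; the definition of ``atomic in the branch $P$'' explicitly looks for separators \emph{above} $P$ in $T$. If $P$ is a non-root atom and some anti-edge $x$ of $P$ has both endpoints in a separator above $P$, then $x$ is atomic in $P$ and is counted in $\alpha(P)$. So $\alpha(P)$ can be positive, and your deduction ``$e(P)\le e(\widehat P)=e(\widehat P)-\alpha(P)$'' breaks. (Your later remark about restricting the framework to the branches $A$ and $B$ suggests the same misconception: the induction is over parts of one fixed pair $(T,\widehat T)$, not over standalone subtrees, precisely because $\alpha$ is global.) The fix is immediate and is what the paper does: for an atom $P$ with $m$ vertices, $e(\widehat P)=\binom{m}{2}$ and $e(\widehat P)-e(P)$ is the number of anti-edges of $P$; since every anti-edge counted in $\alpha(P)$ is in particular an anti-edge of $P$, we have $\alpha(P)\le e(\widehat P)-e(P)$, which rearranges to the desired inequality. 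With this correction your proof coincides with the paper's.
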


\begin{proof}
 We prove this by induction. If $P$ is an atom, then $\alpha(P)$ is the number of anti-edges of 
$P$ and hence we have equality in the inequality of \autoref{baby-case}.
Now let $P$ be a part that is not an atom and let $S$ be its top\footnote{The \emph{top 
separator} of a part $P$ that is not an atom is the separator just below $P$ in the tree-order. } 
separator.
Let $A$ and $B$ be the two branches of $S$. Recall that \autoref{separation_edges} is: 
$e(P)=e(A)+e(B)-e(S)$. Plugging in the induction hypothesis yields:
\[
 e(P)\leq e(\widehat A) - \alpha(A) + e(\widehat B) - \alpha(B) - e(S)
\]
We note that the number of anti-edges of the separator $S$ is at most $f(S)+ \alpha(A,S)$ by the 
definition of frameworks. 
So $e(S)\geq \frac{s\cdot (s-1)}{2}-f(s)-\alpha(A,S)$. 
Hence applying the recursive formula for frameworks, 
\autoref{abstract_recursion}, yields:
\[
 e(P)\leq e(\widehat P) - \alpha(A) - \alpha(B) +\alpha(A,S)
\]
Now we apply \autoref{alpha}: 
 \[
 e(P)\leq e(\widehat P) - \alpha(P)
\]
 This completes the induction step.  
\end{proof}

\section{Abstract separator-trees}

In this short section we define \astr s. 

Very roughly, abstract separator-trees are like separator-trees, where we forget (or `abstract') 
some of the information associated to the nodes. Abstract separator trees have the advantage that 
they are easier to analyse (for example they allow for more reduction operations\footnote{Compare 
the proof of \autoref{abstr_sep_tr} in 
\autoref{sec:gen_case}. There we make use of a reduction of an abstract separator-tree to a 
normal one. It is not clear how this reduction operation should be defined on the level of 
separator-trees. Moreover, \autoref{sec:saturn} below would be more technical 
otherwise. }) but 
yet -- as 
turns out -- they give the optimal bounds.

Given a natural number $k$, an \emph{ abstract separator-tree} (of 
adhesion $k$) consists of a bipartite tree $T$ with the following properties. 
The nodes in the first bipartition class are called the \emph{parts}. They all have 
degree one or two (except in the degenerated case, see 
below). One node of degree one is called the \emph{root}. All other leaves are called 
\emph{atoms}. 
In the degenerated case, the one where $T$ has only one node, that node is the root and atomic. 
We endow the tree $T$ by a tree-order in which the root is the largest element. 
The nodes in the second partition class are called the \emph{separator-nodes}. Each separator 
node has degree three. 
Furthermore each atom and separator-node is assigned an integer, which we refer to as its 
\emph{vertex-number}. Vertex numbers of atoms are between one and $2k$. Vertex numbers of 
separators are between zero and $k$. 

\begin{eg}
 Each separator-tree has an \emph{associated} abstract separator-tree. It has the same underlying 
tree but instead of assigning a graph to an atom it just assigns the number of vertices of that 
graph; similarly, it only assigns to a separator node the size of its separator. It does not 
assign anything to other nodes. 
\end{eg}

The \emph{vertex number} $n(P)$ of a part $P$ of an \astr\ $\widehat T$ is defined recursively on 
the 
tree order of $\widehat T$ as follows.
For atoms this number is already defined. So now let $P$ be a part that is not an atom. 
Let $S$ be the separator-node just below $P$ and let $A$ and $B$ be the two branches of $S$.
By recursion, we may assume that the vertex numbers of $A$ and $B$ are already defined. Let $n(S)$ 
be the vertex number of the separator $S$. We define.
\begin{equation}\label{vertex_recursion}
  n(P)=n(A)+n(B)-n(S)
\end{equation}

The \emph{vertex number} of the \astr\ $\widehat T$ is the vertex number of its root; we 
denote this number by $n(\widehat T)$.

\begin{eg}
For any graph $G$ with separator-tree $T$, the vertex number $n(\widehat T)$ of the associated 
\astr\ 
is equal to the vertex number of the graph $G$. 
\end{eg}

\begin{eg}
The restriction of an \astr\ to one of its parts (together with all 
nodes below) forms an \astr. 
\end{eg}

A key quantity of an \astr\ $\widehat T$ is its \emph{edge number} $e(\widehat T)$. The definition 
of this 
is slightly technical in the general case and will be revealed later. As mentioned earlier, it will 
be defined so that for 
any separator-tree $T$ its associated \astr\ is a framework; so the inequality of 
\autoref{baby-case} is 
satisfied.

An atom of an \astr\ (or a separator-tree) is \emph{normal} if its vertex number is at least 
$k+\frac{k}{3}$. An \astr\ is \emph{normal} if all its atoms are normal. 
A separator-node is 
\emph{saturated} if its vertex number is $k$. An \astr\ (or a separator-tree) is \emph{saturated} if 
all its atoms are 
saturated and every separator-node has a normal atom below. 
\begin{eg}
 All degenerated \astr s (the ones whose tree has only one node) are saturated. 
\end{eg}

In \autoref{sec:saturn} we prove the 
following. 
\begin{lem}\label{make_saturated}\label{saturn}
 For every real\footnote{See below for a definition.} \astr\ $\widehat T$, there is a saturated 
\astr\ $\widetilde T$ such that 
 \[
 e(\widehat T)\leq e(\widetilde T) + k \cdot (n(\widehat T)- n(\widetilde T))
\]
\end{lem}

\begin{rem}
\autoref{saturn} will be used to show that it will suffice to consider saturated \astr s. 
 \end{rem}

 We extend the definition of branches from separator-trees to \astr s in the obvious way. 
 A \emph{valuation} of an \astr\ picks at each separator a \emph{big} branch; the other branch is 
\emph{small}. 
 Throughout this paper we always pick a valuation such that the big branch has always at least as 
many normal atoms as the small one. For normal separator-trees we will not require any further 
properties. For \astr s that have atoms that are not normal we make a slightly more detailed 
choice, see below. 

\section{Normal atoms}\label{sec:normal_atoms}

In this section we prove \autoref{main-intro} for saturated normal separator-trees.
This special case will be used as a lemma in the general case.

Instead of proving \autoref{main-intro} directly, we use the following theorem as an intermediate 
step.

\begin{thm}\label{abstr_sep_tr}
Let $G$ be a graph with $n\geq 2k+1$ vertices and with a separator-tree $T$. Then $T$ has a 
framework 
$\widehat T$ whose edge number $e(\widehat T)$ is at most $\beta \cdot (n-k)+ 
\frac{\gamma}{L}-\epsilon$; 
where $L$ is the number of normal atoms of $\widehat T$ or one if that number is zero;
and $\beta=\frac{5}{3}k-\frac{1}{2}$ and 
$\gamma=\frac{k^2}{3}$ and $\epsilon=\frac{k}{2}$.
\end{thm}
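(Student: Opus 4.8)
The plan is to prove the bound by induction on the separator-tree $T$, following the recursive structure that \autoref{baby-case} and the recursion \eqref{abstract_recursion} already set up. At each part $P$ we must exhibit a framework of its branch, i.e.\ choose the numbers $f(S)$ for the separators $S$ below $P$, subject to the constraint that $f(S)$ is at least the number of free anti-edges at $S$ in the small branch. Since larger $f(S)$ only makes $e(\widehat P)$ larger, the strategy is to take $f(S)$ to be \emph{exactly} the number of free anti-edges at $S$ in the small branch, and then prove that with this choice the quantity $e(\widehat P) - \bigl(\beta(n(P)-k) + \frac{\gamma}{L(P)} - \epsilon\bigr)$ is $\le 0$, where $L(P)$ is the number of normal atoms in the branch of $P$ (or $1$ if there are none). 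The base case is an atom with $m$ vertices, $k+1 \le m \le 2k$: there $e(\widehat P) = \binom{m}{2}$, $L(P)=1$ if $m \ge k+\frac{k}{3}$ and $L(P)=1$ otherwise by convention, and one checks the inequality $\binom{m}{2} \le \beta(m-k) + \gamma - \epsilon$ directly as a quadratic in $m$; this is where the constants $\beta=\frac53 k - \frac12$, $\gamma = \frac{k^2}{3}$, $\epsilon = \frac k2$ come from, and I expect the worst case to be exactly at $m = 2k$ (giving the sharp $3+\frac13$) and the inequality to be tight there.

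For the induction step, let $S$ be the top separator of $P$ with branches $A$ (small) and $B$ (big), vertex numbers $n(A), n(B), s = n(S)$, so $n(P) = n(A)+n(B)-s$ by \eqref{vertex_recursion}. Bound the number of free anti-edges at $S$ in the small branch $A$ crudely: each such anti-edge has both endpoints in $S$, so $f(S) \le \binom{s}{2}$, and in fact a better accounting is available because an anti-edge of $S$ that is free in $A$ is, in particular, an anti-edge of $A$, so one can play off $f(S)$ against the "room" in the atom(s) of $A$. Plug the inductive bounds for $e(\widehat A)$ and $e(\widehat B)$ into \eqref{abstract_recursion}:
\[
 e(\widehat P) \le \beta(n(A)-k) + \tfrac{\gamma}{L(A)} - \epsilon + \beta(n(B)-k) + \tfrac{\gamma}{L(B)} - \epsilon - \tfrac{s(s-1)}{2} + f(S).
\]
Using $n(A)+n(B) = n(P)+s$ this becomes $\beta(n(P)-k) - \epsilon + \bigl[\beta s - \beta k + \frac{\gamma}{L(A)} + \frac{\gamma}{L(B)} - \frac{s(s-1)}{2} + f(S) - \epsilon\bigr]$, and it remains to show the bracketed quantity is at most $\frac{\gamma}{L(P)}$. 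Since $L(P) = L(A) + L(B)$ (the normal atoms of $P$ split between the two branches; one must handle the edge case where a branch has no normal atoms and $L = 1$ by a separate, easier argument), the inequality $\frac{\gamma}{L(A)} + \frac{\gamma}{L(B)} \le \frac{\gamma}{L(A)+L(B)} + (\text{something absorbing } \beta s - \frac{s(s-1)}{2} + f(S) - \beta k - \epsilon)$ is what we need, and the superadditivity of $x \mapsto \gamma/x$ in the wrong direction is exactly what forces the $\beta s$ term: the linear-in-$s$ saving from $n(A)+n(B)=n(P)+s$ (mediated by $\beta$) pays for both the loss in the $\gamma/L$ terms and the at-most-$\binom{s}{2}$ contribution from $f(S) - \frac{s(s-1)}{2} \le 0$.

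The main obstacle, I expect, is the bookkeeping of the $\frac{\gamma}{L}$ term across the recursion: $\gamma/L$ is a concave decreasing function, so when a branch splits into two the sum $\gamma/L(A) + \gamma/L(B)$ can be much larger than $\gamma/L(P)$, and controlling this requires showing that whenever $L(A), L(B) \ge 1$ the excess $\gamma/L(A) + \gamma/L(B) - \gamma/L(P)$ is compensated by the genuine edge-deficit terms $\beta s - \binom{s}{2} + f(S)$ plus the slack $-\beta k - \epsilon + \dots$ coming from the separator having few vertices relative to $k$. I would prove this by splitting into cases according to whether $L(A) = 1$ (the small branch has at most one normal atom, which by our valuation convention is the typical case) or $L(A) \ge 2$; in the first case $\gamma/L(A) + \gamma/L(B) - \gamma/L(P) = \gamma(1 - \frac{L(B)}{L(B)+1}) = \frac{\gamma}{L(B)+1} \le \frac{\gamma}{2}$, and one checks $\frac{\gamma}{2} + \binom{s}{2} - \beta s + (\beta k + \epsilon - \dots) \ge f(S)$ — here using the atom-counting bound on free anti-edges — while the second case is handled by a cruder estimate since then both $L(A)$ and $L(B)$ are bounded away from the problematic regime. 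Finally, reconciling $f(S)$ with the constraint in the definition of framework (it need only be \emph{at least} the free-anti-edge count, so choosing it equal is legitimate) completes the argument, and the sharpness at $m=2k$, $s=k$ in the base case matches the extremal construction referenced in \autoref{extremal}.
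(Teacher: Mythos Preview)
Your inductive scheme is the right scaffolding, but the induction step does not close with the bound $f(S)\le\binom{s}{2}$. Take the saturated case $s=k$ and $L(A)=1$, $L(B)$ large. Then $\beta s-\beta k=0$ and your crude bound gives $f(S)-\tfrac{s(s-1)}{2}\le 0$, so your bracket is at least $\tfrac{\gamma}{L(A)}+\tfrac{\gamma}{L(B)}-\epsilon\approx \gamma-\epsilon=\tfrac{k^2}{3}-\tfrac{k}{2}$, which for $k\ge 2$ is far larger than $\tfrac{\gamma}{L(P)}\approx 0$. (Your displayed identity $\tfrac{\gamma}{L(A)}+\tfrac{\gamma}{L(B)}-\tfrac{\gamma}{L(P)}=\tfrac{\gamma}{L(B)+1}$ is also wrong: with $L(A)=1$ the left side is $\gamma+\tfrac{\gamma}{L(B)(L(B)+1)}\approx\gamma$, not $\le\tfrac{\gamma}{2}$.) What the paper actually uses is the much sharper multipartite bound $f(S)\le \tfrac{s^2}{2}-\tfrac{s^2}{2L(A)}$ (\autoref{fS_normal}): vertices of $S$ lying in a common atom of the small branch cannot be joined by a free anti-edge, so the free anti-edges live in a complete $L(A)$-partite graph on $s$ vertices. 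With $s=k$ this contributes $-\tfrac{k^2}{2L(A)}$, and together with \autoref{calc1}, which gives $\tfrac{1}{L(A)}+\tfrac{1}{L(B)}\le \tfrac{1}{L(P)}+\tfrac{3}{2L(A)}$, the constants $\gamma=\tfrac{k^2}{3}$ and $\epsilon=\tfrac{k}{2}$ are chosen exactly so that $\tfrac{3\gamma}{2L(A)}-\tfrac{k^2}{2L(A)}+\tfrac{k}{2}-\epsilon=0$. This cancellation is the engine of the induction step, and it is simply not available under the trivial estimate $f(S)\le\binom{s}{2}$.

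There is a second, independent gap: your base case fails for atoms with fewer than $k+\tfrac{k}{3}$ vertices. The inequality $\binom{m}{2}\le\beta(m-k)+\gamma-\epsilon$ is a downward parabola in $m$ that holds only on the interval $[k+\tfrac{k}{3},\,2k]$; for smaller $m$ (and atoms can have as few as one vertex) it is false, sometimes badly so. The paper does not try to repair the base case directly. Instead it first reduces to the case where every separator has exactly $k$ vertices via the saturation process of \autoref{sec:saturn}, then proves the bound for the normal saturated case (\autoref{voll_normal}), and finally accounts for the tiny atoms by grouping them into molecules and showing (\autoref{molecule_removal}) that deleting a whole molecule changes $e(\widehat T)-\beta n$ by an amount controlled by the branching error term at that molecule's final separator. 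Your sketch has no counterpart to any of this machinery, and the ``separate, easier argument'' you allude to for branches without normal atoms is in fact the hardest part of the proof.
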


\begin{proof}[Proof that \autoref{abstr_sep_tr} implies \autoref{main-intro}. ]
Let $k\geq 1$ be a natural number and $G$ be a graph that has 
average degree at least $(3+\frac{1}{3}) k$. Suppose for a contradiction that the graph $G$ has no 
$(k+1)$-connected subgraph with more than $2k$ 
vertices. Then by \autoref{sep--tr_exist} the graph $G$ has a separator-tree $T$. 
By the average degree condition, the vertex number $n$ of $G$ is at least $2k+1$.
Let $\widehat T$ be a framework for $T$ as in \autoref{abstr_sep_tr}. By \autoref{baby-case} the 
number of edges of $G$ is upper-bounded by the 
edge number $e(\widehat T)$. 

By \autoref{abstr_sep_tr} the 
edge number $e(\widehat T)$ is less than $\beta \cdot n$. So the graph $G$ has less than  $\beta 
\cdot 
n$ edges. 
It follows that the average degree of $G$ is less than $\frac{10}{3}k$. This is the desired 
contradiction. 
So the graph $G$ contains a $(k+1)$-connected subgraph with more than $2k$ 
vertices.  
\end{proof}

Let $T$ be a separator-tree with a separator $S$. Let $A$ be the small branch of $S$. We denote 
the number of atoms of $A$ by $L(A)$. We denote the number of vertices of $S$ by $s$. 

\begin{lem}\label{fS_normal}
 The number $F$ of free anti-edges in $A$ at $S$ is upper-bounded by  
 \[
 \frac{s^2}{2}- \frac{s^2}{2 L(A)};
\]
\end{lem}

 \begin{proof}
 Each vertex of $S$ is contained in at least one atom. Vertices that are contained in a common atom 
are not joined by a free anti-edge. We assign each vertex of $S$ to an atom containing it. The 
number $F$ of free anti-edges of $S$ in $A$, is upper-bounded by the anti-edges between vertices of 
$S$ assigned to different atoms. That is, it is upper-bounded by the number of edges of the 
complete multi-partite graph with the appropriate partition sizes. This number is maximised if 
 these partitions have the same size (up to rounding). It is well-known that the edge 
number 
of a complete balanced multi-partite graph with $s$ vertices and $L(A)$ classes is\footnote{One 
way to see this is to count the edges of its complement. It consists of $L(A)$ complete graphs, 
each on $\frac{s}{L(A)}$ vertices. }:
 \[
 \frac{s^2}{2}- \frac{s^2}{2 L(A)};
\]
(and it is a little smaller if there is rounding). This proves the lemma. 
 \end{proof}

 \autoref{fS_normal} motivates the following.
 We define \emph{the} framework $\widehat T$ \emph{associated} to a normal separator-tree $T$ by 
letting the value $f(S)$ at a separator $S$ be the term in \autoref{fS_normal}. (This is a 
refinement of the definition of `frameworks of separator-trees' given above). 
\begin{rem}\label{rem43}
 This definition could also be made if the separator-tree $T$ was not normal but it will not be 
good enough for us in this case and we will rely on a strengthening of \autoref{fS_normal} instead, 
see below.
\end{rem}

Similarly for normal \astr s $\widehat T$, we define a function $f$ assigning to each 
separator-node $S$ of $\widehat T$ the non-negative integer
\[
 f(S) =  \frac{s^2}{2}- \frac{s^2}{2 L(A)};
\]
here $s$ is the vertex number of $S$ and $L(A)$ is the number\footnote{If the number $L(A)$ is 
larger than 
$s$ this bound is not best-possible. Below this will make our estimate only a tiny bit worse as 
each time we apply it the values of $L(A)$ at least doubles. So this gives an error of at most 
one.} of normal atoms in the small branch 
$A$ of $S$. 
We refer to the number $f(S)$ as the \emph{number of free anti-edges} at $S$. 

The \emph{edge number} of a part $\widehat P$ of a normal \astr\ is recursively defined as follows.
The edge number of an atom with vertex number $m$ is equal to the number of edges of the complete 
graph on $m$ vertices; that is, it is equal to $\frac{m(m-1)}{2}$. 
As for frameworks, we define the edge number of a part $\widehat P$ recursively from the edge 
numbers of the two parts just below via \autoref{abstract_recursion}.
The \emph{edge number of an \astr} is the edge number of its root part. 

The definition of edge numbers of (parts of ) \astr s in the general case is the same except that 
the definition of the number $f(S)$ of free anti-edges is more technical (but specialises to the 
one given here in the normal case). We will reveal the definition of $f(S)$ in the general case 
later. 

\begin{eg}\label{eg44}
Given a separator-tree $T$, the edge number of its associated framework is equal to the edge number 
of the associated \astr\ of $T$. 
\end{eg}

The goal of this section is to prove the following strengthening of \autoref{abstr_sep_tr} for 
normal saturated
\astr s. In \autoref{sec:gen_case} we reduce the general case to this 
special 
case.

\begin{thm}\label{voll_normal}
Let $\widehat T$ be a nonempty normal saturated abstract separator-tree with vertex number $n$.
Then the edge number $e(\widehat T)$ is upper-bounded by $\beta \cdot (n-k)+ 
\frac{\gamma}{L}-\epsilon$ 
minus $\gamma$ times 
the branching error sum of $\widehat T$; 
\end{thm}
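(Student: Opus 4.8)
The proof should proceed by induction on the tree-order of $\widehat T$, exactly in the style of \autoref{baby-case}, but now tracking the extra term $\frac{\gamma}{L}$ and the branching error sum. The base case is a degenerate $\widehat T$ consisting of a single saturated atom with vertex number $m = n$; since the atom is normal, $m \geq k + \frac{k}{3}$, so $L = 1$ and the branching error sum is zero, and one checks by direct calculation that $\binom{m}{2} \leq \beta(m-k) + \gamma - \epsilon$ — this is a quadratic-in-$m$ inequality that (given the choice $\beta = \frac53 k - \frac12$, $\gamma = \frac{k^2}{3}$, $\epsilon = \frac{k}{2}$) is tight precisely at $m = 2k$ and at $m = \frac{4k}{3}$, which is where the constant $3+\frac13$ comes from.

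For the induction step, let $P$ be a non-atomic part with top separator $S$ (with $s$ vertices) and branches $A$ (small) and $B$ (big), with $L_A, L_B$ normal atoms respectively, so $L = L_A + L_B$. Apply the recursive edge formula \autoref{abstract_recursion}, $e(\widehat P) = e(\widehat A) + e(\widehat B) - \binom{s}{2} + f(S)$, where $f(S) = \frac{s^2}{2} - \frac{s^2}{2L_A}$ by the definition in this section, plug in the two induction hypotheses for $\widehat A$ and $\widehat B$, and use \autoref{vertex_recursion}, $n(P) = n(A) + n(B) - s$. Collecting the linear terms $\beta(n(A)-k) + \beta(n(B)-k) - \beta(n(P)-k) = \beta(s - k)$, the task reduces to showing that the leftover
\[
\beta(s-k) - \epsilon - \binom{s}{2} + f(S) + \frac{\gamma}{L_A} + \frac{\gamma}{L_B} - \gamma\cdot(\text{br.err of }A) - \gamma\cdot(\text{br.err of }B)
\]
is at most $\frac{\gamma}{L} - \epsilon - \gamma\cdot(\text{br.err of }P)$. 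Since the branching error sum of $P$ is (presumably) the sum of those of $A$ and $B$ plus a local branching-error term at $S$ — the step where the small branch has $L_A$ normal atoms — what remains is a purely arithmetic inequality relating $\beta(s-k) - \binom{s}{2} + f(S)$, the convexity gap $\frac{\gamma}{L_A} + \frac{\gamma}{L_B} - \frac{\gamma}{L}$, and the local branching error. Here one uses $s \leq k$ and $f(S) - \binom{s}{2} = -\frac{s^2}{2L_A} + \frac{s}{2} \cdot$(lower order), together with the superadditivity $\frac{1}{L_A} + \frac{1}{L_B} \geq \frac{1}{L_A + L_B}$ (in fact with a quantitative gap governed by $\min(L_A,L_B)$), to absorb everything.

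The main obstacle is the bookkeeping of the branching error sum: one must verify that the local contribution at $S$ — which morally measures how far $\widehat T$ is from the extremal "path-like" structure, and which by our valuation convention satisfies $L_A \leq L_B$, so $L_A \leq L/2$ — is exactly what the algebra spits out, so that the induction closes with the stated coefficient $\gamma$ in front. Concretely, one expects the local branching error at $S$ to be something like a multiple of $\frac{1}{L_A} - \frac{1}{L_A + L_B}$ (or $\frac{1}{L_A} + \frac{1}{L_B} - \frac{2}{L}$), and the worst case $s = k$ should be handled by checking that the resulting quadratic in $s$ has its relevant extremum at the boundary. The remaining subtlety is the footnote-level issue that $f(S)$ is not quite optimal when $L_A > s$; as noted in the text this costs at most an additive constant, and one should confirm that this slack is comfortably swallowed by the $\frac{\gamma}{L}$ term (which is at least $\frac{\gamma}{L_A + L_B} \geq \frac{1}{2}$ in the regime that matters). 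Once these pieces are in place, the induction step is a one-line application of \autoref{vertex_recursion} and \autoref{abstract_recursion} followed by the arithmetic lemma.
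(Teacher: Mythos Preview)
Your overall architecture is exactly the paper's: induction on the tree-order, base case a single normal atom where one checks the quadratic inequality $\binom{m}{2}\le\beta(m-k)+\gamma-\epsilon$ (with equality at $m=2k$ and $m=\tfrac{4k}{3}$), and induction step via the recursion \autoref{abstract_recursion}. Two loose ends, however, keep your sketch from closing.

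First, you treat the separator size $s$ as a free parameter and speak of ``the worst case $s=k$''. But the hypothesis \emph{saturated} means every separator has vertex number exactly $k$; there is no variation in $s$ to analyse. Using $s=k$ kills the term $\beta(s-k)$ outright and makes $f(S)-\binom{k}{2}=\tfrac{k}{2}-\tfrac{k^2}{2L_A}$, which is what lets the constants $\epsilon=\tfrac{k}{2}$ and $\gamma=\tfrac{k^2}{3}$ cancel cleanly. Your quadratic-in-$s$ extremum discussion is unnecessary.

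Second, and more seriously, you guess at the shape of the local branching error (``something like $\tfrac{1}{L_A}-\tfrac{1}{L_A+L_B}$ or $\tfrac{1}{L_A}+\tfrac{1}{L_B}-\tfrac{2}{L}$''). The precise formula $x(S)=\tfrac{1}{2L_A}+\tfrac{1}{L_A+L_B}-\tfrac{1}{L_B}$ is part of the theorem statement, and it is not arbitrary: it is \emph{exactly} the slack in the elementary inequality
\[
\frac{1}{L_A}+\frac{1}{L_B}\ \le\ \frac{1}{L_A+L_B}+\frac{3}{2}\cdot\frac{1}{L_A}
\]
(this is \autoref{calc1}). Once you plug in $s=k$ and this identity, the induction step becomes an equality, with no leftover to ``absorb''. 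Your conjectured forms for $x(S)$ would not make the step close with coefficient $\gamma$.

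Finally, the footnote about $f(S)$ being suboptimal when $L_A>s$ is a red herring for this proof: $f(S)$ is used as a defined quantity in the recursion, not as a bound to be re-established, so no slack needs to be swallowed there.
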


here $L$ denotes the number of normal atoms of $\widehat T$; and 
here the \emph{branching error sum} of $\widehat T$ is non-negative and defined to be the sum of 
the 
branching error terms of the separators of the $\widehat T$. The \emph{branching error term} 
at a separator $S$ of $\widehat T$ is: 
\begin{equation}\label{eq6}
  x(S) = \frac{1}{2} \frac{1}{\ell(S,-)}+ \frac{1}{\ell(S,-)+\ell(S,+)}-\frac{1}{\ell(S,+)};
\end{equation}
where $\ell(S,-)$ is the number of normal atoms of the small branch of $S$ and similarly 
$\ell(S,+)$ is the number of normal atoms of the big branch of $S$. In particular, $x(S)$ is zero 
if these two numbers agree. 

Throughout the paper, we denote the number of normal atoms below a part $P$ by $L_P$.

Before we start with the proof, we show the following simple lemma.
Let $P$ be a part of  a separator-tree that is not an atom, $S$ be its top separator 
and $A$ and $B$ be the two branches of $S$. 

\begin{lem}\label{calc1}
Assume that the branch $A$ is small. Then:
\[
 \frac{1}{L_A}+ \frac{1}{L_B}\leq \frac{1}{L_P} + \frac{3}{2}\cdot \frac{1}{L_A}  
\]
\end{lem}

\begin{proof}
Recall that $L_P=L_A+L_B$. We abbreviate $a=L_A$ and $b=L_B$. 
As $a,b\geq1$ and $a\leq b$ we estimate:
\[
 \frac{1}{b}- \frac{1}{a+b} = \frac{a}{b\cdot (a+b)}\leq \frac{1}{a+b}\leq 
\frac{1}{2}\cdot \frac{1}{a}
 \]
 
The inequality between the most left and the most right term can be rearranged to the inequality 
of the lemma. 
\end{proof}

\begin{proof}[Proof of \autoref{voll_normal}.]
  Let $\widehat T$ be a nonempty normal saturated \astr\ and $\widehat P$ be one of branches part. 
We prove 
\autoref{voll_normal} by induction for all branches $\widehat P$ of $\widehat T$ using the 
tree structure of $\widehat T$. 
 
 First we prove the weakening of \autoref{voll_normal} where we ignore the error term; that is, we 
show that 
 $e(\widehat P)\leq \beta \cdot (n-k)+ \frac{\gamma}{L}-\epsilon$ for all parts $\widehat P$ of 
$\widehat 
T$. (In the last step of the induction we prove that inequality for $\widehat P=\widehat T$).
 
 The base case is that the branch $\widehat P$ consists of only one atom. 
 At this point we check the base case but reveal it later; this way it will be easier to 
understand for the reader why we chose the constants $\beta$, $\gamma$ and $\epsilon$ as we did.

Now we start with the induction-step. So let $\widehat P$ be a branch whose abstract separator-tree 
has 
at 
least one separator node. Let $S$ be the top separator of $\widehat P$. Let ${\widehat 
A}$ and ${\widehat B}$ be the two branches of $S$. 
We apply the induction hypothesis to the \astr\ of the branches ${\widehat A}$ and ${\widehat 
B}$. Plugging that into \autoref{abstract_recursion} yields:
\begin{equation}\label{long}
  e(\widehat P)\leq \beta \cdot (n_{\widehat P}-k)+ \gamma \cdot 
\frac{1}{L_{\widehat A}}+ 
\gamma 
\cdot 
\frac{1}{L_{\widehat B}}-2\epsilon- \frac{k(k-1)}{2}+ f(S);  
\end{equation}
here recall that the separator $S$ has vertex number $k$ as $\widehat T$ is saturated.

We shall show that the term on the right hand side is bounded from above by $\beta \cdot 
(n_{\widehat 
P}-k)-\epsilon+ \frac{\gamma}{L_{\widehat P}}$. Since the involved expressions are quite long, we  
compare 
them in parts. We start by comparing the `$\gamma$'-terms; that is, the terms with the coefficient 
$\gamma$.

By symmetry we may assume that ${\widehat A}$ is the small branch at $S$ and ${\widehat B}$ is the 
big 
branch. 
By \autoref{calc1}, we have:

\begin{equation}\label{s1}
\gamma \cdot \frac{1}{L_{\widehat A}}+ \gamma \cdot 
\frac{1}{L_{\widehat B}}\leq \gamma \cdot\frac{1}{L_{\widehat P}} + \frac{3}{2}\cdot \gamma \cdot 
\frac{1}{L_{\widehat A}}
\end{equation}

In short, in comparing the $\gamma$-terms we have the additional summand $\frac{3}{2}\cdot \gamma 
\cdot \frac{1}{L_{\widehat A}}$. By definition of $f(S)$, the sum of the last two terms of 
\autoref{long} is $- \frac{k^2}{2 \cdot 
L_{\widehat A}}+\frac{k}{2}$. The difference of the $\epsilon$-terms is $-\epsilon$.
We will show that the sum of these three terms evaluates to zero; that is;
\[
 0= \frac{3}{2}\cdot \gamma 
\cdot \frac{1}{L_{\widehat A}}- \frac{k^2}{2 \cdot 
L_{\widehat A}}+\frac{k}{2}-\epsilon
\]
Our choice that $\epsilon=\frac{k}{2}$ ensures that the last two terms cancel. The other two terms 
cancel as $\gamma= \frac{k^2}{3}$. 
So $e(\widehat P)\leq \beta \cdot (n_{\widehat P}-k)-\epsilon+ \frac{\gamma}{L_{\widehat P}}$.
This completes the induction step.

\vspace{.3cm}

Having finished the induction-step, we now reveal the induction-start. 
We abbreviate $n_{\widehat P}$ simply by $n$ for the rest of this proof. 
Here
\[
 e({\widehat P})=\frac{n(n-1)}{2}
\]
So it remains to show that
\[
 \frac{n(n-1)}{2}\leq \beta\cdot  (n-k)+ \frac{k^2}{3}-\frac{k}{2}
\]
for $n$ between $2k$ and $k+\frac{k}{3}$.

We remark that the choice of $\beta=\frac{5}{3}\cdot k- \frac{1}{2}$ implies that 
for 
$n=2k$ we have equality in the above inequality (to see that we first ignore the linear term; 
that is a term of $-\frac{n}{2}$ on the left and a term of $- \frac{1}{2} (n-k)$ and 
-$\frac{k}{2}$ on the right. Then we note that these linear error terms cancel for $n=2k$). We 
consider $k$ as fixed and let $n$ vary. If the absolute value of $n$ is huge, the above inequality 
is false. As it has degree two in $n$, it is true in between the two values for $n$ where we have 
equality. Aside from $n=2k$, the other value is $n=k+\frac{k}{3}$ (if we ignore the linear error 
terms. If not, we verify that they cancel.). This completes 
the induction start. 
 
 \vspace{.3cm}
 
 This completes the proof that  $e(\widehat P)\leq \beta \cdot (n-k)+ \frac{\gamma}{L}$. Now we 
explain 
how this proof can be modified to obtain the strengthening claimed in \autoref{voll_normal}. 
In some of the induction-steps we wasted a little bit when applying \autoref{calc1}. The terms we 
wasted in the induction-step at the separator $S$ is equal to the error term $x(S)$ at $S$. Taking 
these terms into account proves the stronger version of \autoref{voll_normal}. 
\end{proof}

 \vspace{.3cm}

For later reference, we estimate the following.
\begin{lem}\label{calci}
 If $\ell=\ell(S,-)$ is strictly smaller than $\ell(S,+)$, then 
\[
 x(S)\geq \frac{1}{4\ell^2}
\]

\end{lem}
\begin{proof}
We obtain a lower bound for $x(S)$ by replacing  $\ell(S,+)$ by $\ell+1$ in \autoref{eq6}. 
Elementary computations yield:
 \[
  x(S)\geq \frac{1}{2} \left[    \frac{(\ell+1)\cdot (\ell+0.5)-\ell^2}{\ell\cdot (\ell+1)\cdot 
(\ell+0.5)}                \right]
  \]

We simplify:
 \[
  x(S)\geq \frac{1}{4}\cdot\frac{3\ell+1}{\ell\cdot (\ell+1)\cdot (\ell+0.5)}
 \]
 We further estimate:
 \[
  x(S)\geq \frac{1}{4}\cdot\frac{3}{(\ell+1)\cdot (\ell+0.5)}\geq 
\frac{1}{4}\cdot\frac{3}{3\ell^2}= \frac{1}{4\ell^2};
 \]
here we use that $\ell\geq 1$ in the second inequality as this is true for any separator of an 
\astr.\footnote{This rough estimate could be improved to give a better constant. We will not use 
this later.}
 \end{proof}

 \vspace{.3cm}
 
 We end this section by giving several definitions related to \astr s that we have defined above 
only in 
the normal case. 

Recall that a branch of a separator\footnote{For simplicity, we will occasionally abbreviate 
`separator-node' by `separator'. Mostly we use this in the context of \astr s, where is no 
danger of confusion.} $S$ is big if it has more normal atoms than the other branch of 
$S$. A separator is \emph{normally balanced} if both its branches contain the same number of normal 
atoms. 

A branch $A$ of a separator $S$ is \emph{almost small} for $S$ if $A$ does not 
contain more normal atoms than the other branch of $S$. 
\begin{eg}
 Both branches of a normally balanced 
separator are almost small.
\end{eg}
Now we define how we choose the big branch of normally balanced 
separators. This is defined by recursion on the tree-structure of its separator-tree. 

An atom that is not normal is \emph{tiny}. 
A \emph{tiny vertex} of a separator-tree is a vertex in a tiny atom that is not in the separator 
just 
above its atom. The \emph{number of tiny vertices in a tiny atom} is the vertex number of that atom 
minus the vertex number of the separator just above. 
\begin{rem}\label{tiny_vertices}
 Although tiny vertices are just defined for separator-trees, the number of tiny vertices is also 
defined for \astr s. 
\end{rem}

A tiny atom $A$ \emph{desires} a separator $S$ if the branch of $S$ containing $A$ is almost small 
for $S$ and there is no separator below $S$ 
that has a small 
branch containing $A$ other than the separator just above $A$, and $S$ is not the separator just 
above $A$. 
The \emph{number of tiny vertices relevant} at the separator $S$ in a branch $A$ of $S$
is the sum of numbers of tiny vertices of tiny atoms that desire the separator $S$ and are 
in the branch $A$. 

We choose 
the \emph{big branch} of a normally balanced separator in an \astr\ (or a separator-tree)  such 
that its number of relevant tiny vertices is at least as big as that of the other branch; if both 
branches have the same number of 
relevant tiny vertices, we make the choice arbitrarily. 

We shall assume throughout that every \astr\ (or separator-tree) has a valuation with 
this property and we will not 
always mention valuations explicitly but assume that they are implicitly given by the context and 
just say things like `the big branch of a separator $S$'.

\vspace{.3cm}

Given a separator-tree, the anti-edges incident with a vertex $v$ of an atom $A$ contribute 
at at most one separator to the number of 
free-anti-edges. This separator is the smallest separator above $A$ such that 
$A$ is in its small branch and the vertex $v$ is in the separator; if there is no such separator, 
then the anti-edges incident with 
$v$ never contribute as free anti-edges. This separator could be the one desired by $A$ or be 
above it.
Next we define the `technical data' of an \astr, which is a function that mimics this for 
\astr s.

Given an \astr, a \emph{technical data} is a function $\iota$ assigning a number $m(A,S)$ to each 
pair $(A,S)$ consisting of a tiny atom $A$ in the small branch of $S$, where $S$ is not the 
separator just above $A$. This function $\iota$ satisfies the following:
\begin{itemize}
 \item for a fixed tiny atom $A$ the sum ranging over all numbers $m(A,S)$ is upper-bounded by the 
number of tiny vertices of $A$;
 \item for a fixed separator $S$, the sum ranging over all numbers $m(A,S)$ is upper-bounded by the 
vertex number of $S$. 
\end{itemize}

We abbreviate the sum over all $m(A,S)$ with $S$ fixed by $m(S)$. 

\begin{eg}
 Let $\widehat T$ be the associated \astr\ of a separator-tree $T$. The \emph{associated} technical 
data is defined as follows.
For each tiny vertex $v$, let $S_v$ be the smallest separator of $T$ such that the atom of $v$ is 
in the small branch of $S_v$ and $v\in S_v$. 
Note that $S_v$ is not the separator just above the atom of $v$. Note further that $S_v$ may be 
undefined. 
We let $m(A,S)$ be the number of tiny vertices $v$ of the atom $A$ such that $S=S_v$. This 
completes the definition of the associated technical data. 
\end{eg}

\begin{rem}
 We will assume that technical datas are always implicitly given by the context and not mention 
them explicitly. They will only appear explicitly at the end of the proof of 
\autoref{molecule_removal}. Intuitively, one of the things this proof shows is that 
one may always assume that the technical data assigns the number of tiny vertices at an atom $A$ to 
the separator desired by $A$ (if existent). 
\end{rem}

The \emph{number $f(S)$ of free anti-edges} at a separator $S$ of an \astr\ is defined as 
follows.
\[
   f(S)=\frac{s^2}{2}- \frac{(s-m(S))^2}{2 L_A} - \sum \frac{(m(X,S))(m(X,S)-1)}{2}- \frac{m(S)}{2};
\] 
here $L_A$ denotes the number of normal atoms in the small branch $A$ of $S$; and here the later 
sum 
reaches over all tiny atoms $X$ in $A$ and $s$ denotes the vertex number of $S$.

 \begin{lem}\label{septr_to_abstr}
  Let $T$ be a separator-tree and $\widehat T$ its associated \astr. Let $S$ be a separator.
  Then the number of free anti-edges at $S$ in $T$ is upper-bounded by the number $f(S)$ of 
$\widehat 
T$. 
 \end{lem}

 \begin{proof}
 First we prove the special case where no tiny vertex is assigned to $S$. 
 This case is just \autoref{fS_normal}. 

Each vertex of the separator $S$ is in several atoms. Similarly, as in the definition of `free 
anti-edges', we now assign to each vertex $x$ of $S$ a vertex $x'$ of an atom such an anti-edge 
$vw$ is free if and only if the vertices $v'$ and $w'$ are not in the same atom. 
Let $x$ be a vertex of the separator $S$. Let $P$ be the following path of the tree $T$ starting 
from the small branch at $S$ going downwards in the tree-order. For that we specify which direction 
$P$ takes at all degreee 3 vertices of $T$; that is, at the separator nodes. If there is a unique 
branch containing the vertex $x$, the path $P$ goes in that direction. If the vertex $x$ is in both 
branches, the path $P$ goes in the direction of the big branch. The last vertex of the path $P$ is 
an atom.  
The vertex $x'$ is the copy of the vertex $x$ in the atom that is the last vertex of the path $P$. 
It follows immediately from the definition of `free' that an anti-edge 
$vw$ is free if and only if the vertices $v'$ and $w'$ are not in the same atom. 
If follows from the definition of associated technical data that for any tiny atom $A$ precisely 
$m(S,A)$ vertices $x$ of $S$ have the copy $x'$ chosen from the atom $A$.  

Thus we have $s-m(S)$ vertices of $S$ in normal atoms and in addition $m(S)$ tiny 
vertices. It is straightforward to check in this case that the number of free anti-edges is 
upper-bounded by $f(S)$.   Indeed, the number of free anti-edges is upper bounded by
\[
 \frac{s(s-1)}{2}- L_A\cdot \frac{\frac{s-m(S)}{L_A}\left( \frac{s-m(S)}{L_A}-1   \right)}{2} 
- \sum \frac{(m(X,S))(m(X,S)-1)}{2}
\]
It is a routine computation that this evaluates to the term $f(S)$. 
 \end{proof}
 
\section{Saturating separators}\label{sec:saturn}

The purpose of this (short but slightly technical) section is to prove \autoref{saturn}, which is 
used in the proof of 
\autoref{main-intro}. Basically this lemma says that in the proof it suffices to consider the case 
where the separators do not have less than $k$ vertices. 

Given an \astr\ $\widehat T$, a \emph{saturation} is obtained by doing successively one of the 
following operations.

\begin{enumerate}
\item Assume there is an atom $A$ whose vertex number is not more than that of the 
separator-node $S$ just above. 
The technical data cannot assign tiny vertices to the separator $S$ (that is\footnote{We use 
`$m(\bullet, S)$' as an abbreviation for `$m(A, S)$ for some $A$'.
}, all 
$m(\bullet, S)$ are zero); indeed, if the other branch has tiny vertices, then that branch is big 
and cannot assign vertices to $S$.
In this case, we delete the atom $A$.  By that the separator-node $S$ get 
degree two. Then we suppress\footnote{that is, we replace that vertex by an edge.} the node 
$S$. 
 \item Assume there is a separator-node $S$ that has vertex number less than $k$ and 
smaller than that of its small branch. Then we increase the vertex number of $S$ by one. 
If the small branch consists of a single tiny atom $A$, it may be necessary to also modify the 
technical data. If so, one of the values $m(A,\bullet)$ is positive and we 
decrease it by one. 
 \item Assume there is a part $P$ with at most $2k$ vertices that is not an atom and that has no 
normal 
atom below. Then delete all nodes below it.
If the part $P$ has less than $k+\frac{k}{3}$ vertices, then we define the numbers $m(P,S)$ of the 
technical data to 
be the sums of the numbers $m(A,S)$, where $A$ is below $P$. (And we delete the terms $m(A,S)$ 
from the technical data).
If the part $P$ has more than $k+\frac{k}{3}$ vertices, then we just delete the terms $m(A,S)$ for 
the atoms $A$ below $P$. 

\end{enumerate}

Here we only do the second if we cannot do the first one; and we only do the third if we cannot do 
the first or the second, and we choose the part $P$ in operation 3 minimal in the tree-order. 
This process has to terminate eventually as each step decreases the vertex number $n(\widehat T)$ 
or decreases the number of vertices of the tree $\widehat T$ and no operation increases any of 
these numbers. 
Hence every \astr\ $\widehat T$ has a saturation $\widetilde T$. 

\begin{lem}\label{is_astr}
 The saturation $\widetilde T$ is a saturated \astr.
\end{lem}

\begin{proof}
First we show that the data defined in each performance of the third operation is indeed a 
technical data. For that we have 
to show that the number of tiny vertices of $P$ is not smaller than the sum of tiny vertices of all 
tiny atoms in $P$. If all separators in $P$ have precisely $k$ vertices, this is clear. 
Suppose for a contradiction that the part $P$ has a separator $S$ whose vertex number is less than 
$k$. Let $A$ be the small branch of $S$. If $A$ had higher vertex number than the separator $S$, we 
could perform the second operation instead. So the branch $A$ 
must have vertex number no bigger than that of the separator $S$. If the branch $A$ was an atom, we 
could perform the first operation instead. Hence it is not an atom. But then the branch $A$ 
contradicts the minimality of $P$. Hence all separators of $P$ have size $k$ and the data defined 
in the third operation is indeed a technical data.  

Thus saturations of \astr s are \astr s (whose technical datas are modified as indicated above).

\vspace{.3cm}

Towards showing that they are saturated,
next we prove that every separator-node has vertex number $k$. Suppose not for a contradiction. 
Amongst the separator-nodes whose vertex number is smaller than $k$, we pick $S$ so that it is 
smallest in the tree-order. Let $s$ be its vertex number. Since we cannot apply operation 2, its 
small branch has vertex number at most $s$. As we cannot apply operation 3, this branch must 
be an 
atom. Hence we can apply operation 1 to this part. This is a contraction to the assumption that 
$\widetilde T$ is a saturation. 
Hence every separator-node has vertex number precisely $k$.

It remains to show that below every separator node $S$ there is a normal 
atom. By induction, it suffices 
to show this for separator-nodes that do not have any other separator nodes below them. Let $S$ be 
such a 
separator node. Let $P$ be the part just above $S$. As $\widetilde T$ is a saturation, we cannot 
apply operation 3. Hence we may assume that the vertex number $n(P)$ of $P$ is at least $2k+1$. Let 
$A$ and $B$ be 
the 
two atoms below the separator $S$. By the recursion formula for the vertex number, 
\autoref{vertex_recursion}, and since $S$ has vertex number $k$ one of the vertex numbers of $A$ 
and 
$B$ has to be at least $\frac{3k+1}{2}$. Hence one of the atoms $A$ or $B$ is normal. 
\end{proof}

An \astr\ is \emph{real} if the vertex number of each of its separators is upper-bounded by the 
minimum of the vertex numbers of its two branches. 
\begin{eg}
 All \astr s associated to separator-trees are real. 
\end{eg}

\begin{lem}\label{saturn1}
 Let $\widetilde T$ be a saturation of a real \astr\ $\widehat T$. Then 
  \[
 e(\widehat T)\leq e(\widetilde T) + k \cdot (n(\widehat T)- n(\widetilde T))
\]
\end{lem}

\begin{proof}
First we show that being real is preserved by the performed operations. The only operation that 
could destroy that property is the second operation. It only would destroy it if there were a 
separator $S$ that had the same vertex number as one of its branches $A$. As the \astr\ is real, 
choosing the separator $S$ minimal in the tree-order ensures that the branch $A$ must be an atom. 
Hence we can perform the first operation. By the definition of saturation, we do the first operation 
instead of the second. 
Hence being real is preserved by the performed operations. 

It is easy to check that each time we apply the first operation, the edge number can only go up. 

Next we show that each time we apply the third operation, the edge number can only go up.
If the part $P$ has vertex number less than $k+\frac{k}{3}$, this is clearly true. Indeed, the 
increase in the edge number of $P$ compensates for the loss of free anti-edges. 
So we may assume that the part $P$ has at least that many vertices. 
Moreover, 
if we would do the construction from operation 3 for $P$ as in the tiny case, the same argument 
as above shows that the edge number can only go up. 
Using the formula for the number of free anti-edges (this is defined just before 
\autoref{septr_to_abstr}), it is a simple computation that increasing 
$L_A$ by one but deleting the term $m(P,\bullet)$ does not decrease the edge number. Indeed, 
before we count the edges of some multipartite graph with vertex number 
$n(\bullet)-m(\bullet)+m(P,\bullet)$ and after that we count the edges of a balanced multipartite 
graph with the same vertex number and partition classes. As the balanced multipartite graph 
maximises this number, the edge number can only go up.
Hence in the case that the part $P$ is not tiny, the modifications can be done in two steps and in 
each step the edge number does 
not go down.

So it suffices to show that each time we apply the second operation, the above 
inequality of \autoref{saturn1} remains true. 
When applying operation 2, the vertex number goes down by one. The only way the edge number can 
change is that the terms $f(S)$ for the free anti-edges can change. Changing the technical 
data as in the second operation, changes only one term $f(S)$ and by at most $k$. So it can be 
shown by induction using the recursive edge equation, \autoref{separation_edges}, that the edge 
number 
changes by at most $k$. 
So the above inequality remains true each time we  perform operation 2. 
\end{proof}

\begin{proof}[Proof of \autoref{saturn}.]
 Combine \autoref{is_astr} with \autoref{saturn1}.
\end{proof}

 \begin{obs}\label{normal_leq}
 Let $\widetilde T$ be a saturation of $\widehat T$. 
 Then $\widetilde T$ has at least as many normal atoms as $\widehat T$.
 \end{obs}
 
 \begin{proof}
  None of the three saturation operations changes normal atoms. 
 \end{proof}

 \section{From atoms to molecules}\label{sec:gen_case}
 
We will give an overview of the rest of the proof \autoref{abstr_sep_tr}. Above we have 
already proved \autoref{abstr_sep_tr} for normal saturated \astr s $\widehat T$. Our 
strategy is to deduce the general case from this special case. For that we would like to 
understand how much deleting a tiny atom changes the number $e(\widehat T)-\beta \cdot (n-k)$. It 
turns 
out 
that deleting some tiny atoms increases that number but for others it may decrease. Another thing 
that makes such an analysis more complex is that if we delete some tiny atom, it may change which 
branches are big and small, hence it might affect how much that number changes for other tiny atoms.

We overcome these difficulties as follows. We group the tiny atoms in `molecules' so that -- very 
roughly -- if the deletion of one of these atoms `affects' the other, then they are in the same 
molecule.
Then we analyse how $e(\widehat T)-\beta \cdot (n-k)$ changes if we delete a whole molecule. 

\vspace{.3cm}

\begin{rem}
 Throughout this section we will talk about vertices of tiny atoms. This is an abuse of notation as 
atoms of \astr s have no vertices but just a number of tiny vertices, compare 
\autoref{tiny_vertices}. It is straightforward to rephrase the corresponding places using numbers 
of tiny vertices instead of tiny vertices themselves. Although this is slightly informal, we 
believe that this makes it easier to read.
\end{rem}

\autoref{fig_desire_et_al} provides an example illustrating the following definitions. 
   \begin{figure} [htpb]   
\begin{center}
   	  \includegraphics[height=8cm]{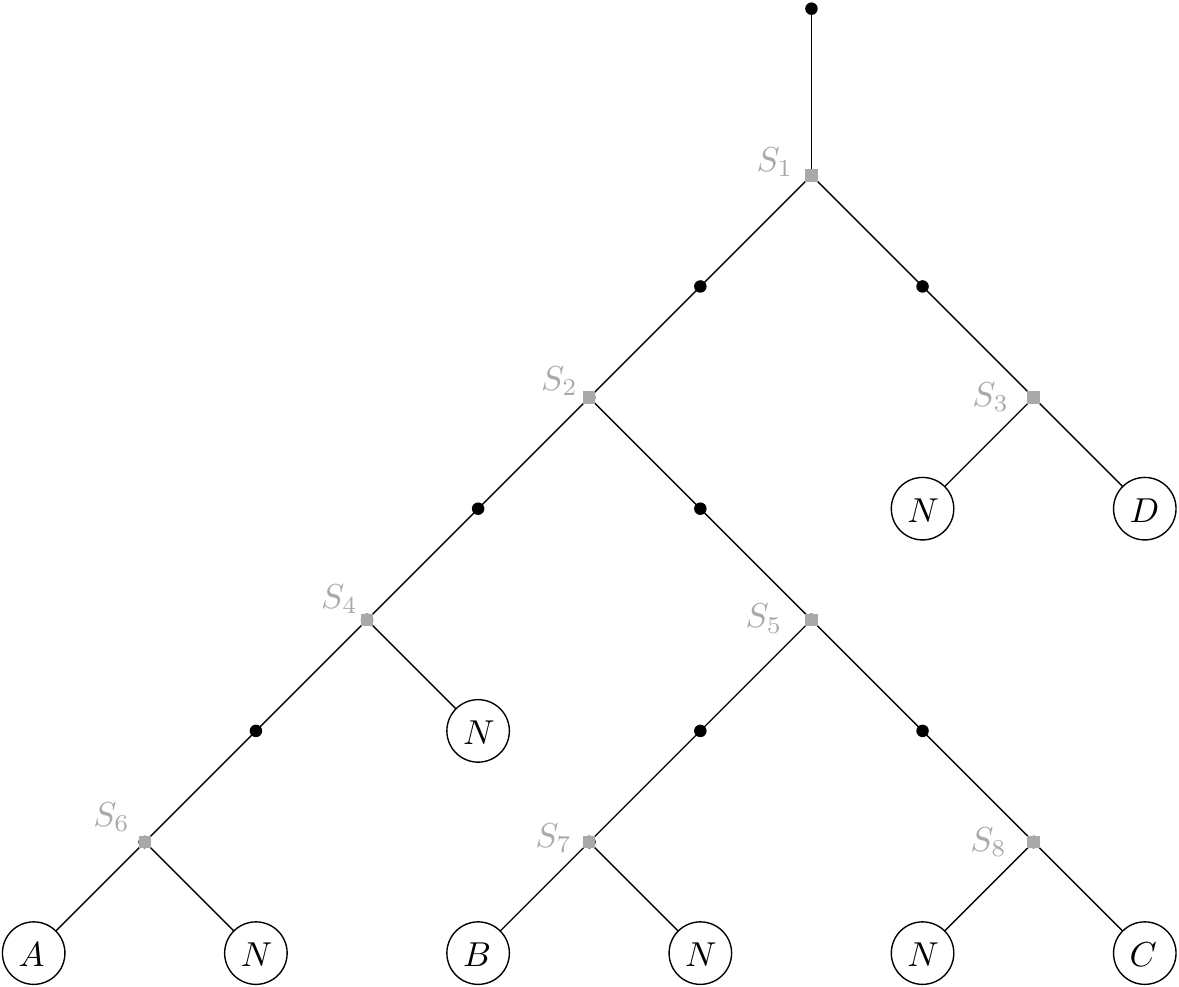}
   	  \caption{An abstract separator-tree. The separators are coloured in grey. The atoms are 
indicated by circles; normal atoms are labelled with the letter $N$. The tiny atoms are labelled 
by $A$, $B$, $C$ and $D$.
The separators $S_2$, $S_4$ and $S_5$ are normally balanced. The separator $S_1$ has only one 
branch 
that is almost small, this is the branch containing $D$. 
The atoms $B$ and $C$ both desire the separator $S_5$ but only one of them achieves it. The atom 
$A$ desires $S_4$ but does not achieve it. The atoms $A$, $B$ and $C$ form a molecule whose final 
separator is above $S_1$ (if existent). The atom $D$ achieves $S_1$ and forms a molecule by itself. 
Its final separator is $S_1$. 
   	  }\label{fig_desire_et_al}
\end{center}
   \end{figure}

Recall that a tiny atom $A$ \emph{desires} a separator $S$ if the branch of $S$ containing $A$ is 
almost small 
for $S$ and there is no separator below $S$ 
that has a small 
branch containing $A$ other than the separator just above $A$, and $S$ is not the separator just 
above $A$. A tiny atom \emph{achieves} a separator if it desires it and is contained in the small 
branch of that separator. Each tiny atom achieves at most one separator; if so, this is the highest 
separator 
it desires.

Two atoms \emph{affect} each other if they both desire a common separator. 
A  \emph{molecule} is a smallest set of atoms that is closed under the symmetric relation 
`affecting each other'. In other words, it is an equivalence class of the transitive closure of 
the symmetric relation `affecting each other'. 

The \emph{final separator} of a molecule is the highest separator achieved by any atom in the 
molecule; if some of its atoms do not achieve a separator, the final separator is undefined. 
The \emph{reach} of a molecule is the number of normal atoms in the small branch of its final 
separator; if the final separator is undefined, this number is infinite. We denote the reach of 
a molecule $M$ by 
$\ell_M$.

\begin{obs}\label{final_distinct}
No two molecules have the same final separator.
\end{obs}

\begin{proof}
 Any final separator is desired by one of its atoms. As no two atoms in different molecules desire 
the same separator, final separators have to be distinct for different molecules. 
\end{proof}

\begin{obs}\label{normally_balanced1}
 Any separator desired by an atom but not achieved by it is normally balanced.
\end{obs}

\begin{proof}
Let $A$ and $B$ be the two branches of a separator $S$. Assume an atom of the branch $A$ 
desires, the separator $S$. So the branch $A$ is almost small.  If that atom does not achieve $S$, 
an atom of the other branch achives, and hence desires the separator $S$. Thus also the branch $B$ 
is almost small. Thus the separator $S$ is normally balanced. 
\end{proof}

\begin{lem}\label{normally_balanced2}
 If a separator is achieved by an atom of a molecule, then it is normally balanced or the final 
separator of that molecule. 
\end{lem}

\begin{proof}
Let $S$ be a separator achieved by a tiny atom $A$. We assume that $S$ is not normally balanced and 
have the aim to show that $S$ is the final separator of the molecule $M$ of $A$. 

Let $A'$ be the branch of $S$ containing $A$. As the atom $A$ desires the separator $S$, it cannot 
be the separator just above $A$; that is, the branch $A'$ is not just a single tiny atom. Hence the 
separator $S$ cannot be the separator just above any tiny atom of $A'$. 
As the atom $A$ desires the separator $S$, the branch 
$A'$ is almost small for $S$. 
So any tiny atom of the branch $A'$ either 
desires $S$ or achieves a separator of $A'$.

As the separator $S$ is not normally balanced, the branch of $S$ different from $A'$ cannot be 
almost small. Hence no atom in that branch desires $S$.

Thus the molecule $M$ of $A$ has all its atoms in the 
branch $A'$. So the separator $S$ is the final separator of the molecule $M$. 
\end{proof}

\begin{lem}\label{final_not_normal}
 No final separator of a molecule is normally balanced.
\end{lem}

\begin{proof}
Suppose for a contradiction there is a final separator $S$ of a molecule that is normally balanced. 
Then $S$ is desired by some atom of $M$. As this atom is contained in the small branch of $S$, the 
big branch also has to contain an atom desiring $S$ as $S$ is normally balanced.
Then this atom must be in the molecule $M$. As not both these atoms can achieve $S$, one of these 
atoms does not achieve any separator or achieves a separator strictly above $S$. This is a 
contradiction to the assumption that $S$ is final for $M$.
\end{proof}

The \emph{reach of a separator} is the number of normal atoms in its small branch. 

\begin{lem}\label{separator_reach}
For a tiny atom $A$, let $S_1, ..., S_m$ be the separators it desires given in increasing order 
in the tree-order. Then the reach of $S_{i+1}$ is at least twice the reach of $S_i$. 
\end{lem}

\begin{proof}
By the definition of reach, the number of normal atoms below $S_i$ is at least twice the reach of 
$S_i$.
By the definition of desires, the branch of each $S_i$ containing $A$ is almost small for all 
$i$. Hence the reach of $S_{i+1}$ is equal to the number of normal atoms in the branch of 
$S_{i+1}$ containing $A$. As this branch contains all (normal) atoms below $S_i$, the lemma 
follows. 
\end{proof}

We say that two atoms are \emph{equivalent} if 
they achieve the same separator. 
\begin{rem}
Atoms that do not achieve any separator are not in any equivalence class. We do not have to include 
such atoms into our analysis, expressed by the term `equivalent', as for such atoms the 
inequalities we estimate below will turn out to be trivially satisfied.
\end{rem}
Note that any two equivalent atoms are in the same molecule. 
Clearly, the set of separators desired by a single atom forms a chain in the tree-order. This 
property also holds for equivalence classes:

\begin{lem}\label{equivi}
 The set of separators desired by atoms in an equivalence class forms a chain in the tree-order.
\end{lem}

\begin{proof}
 Let $A$ and $B$ be two atoms in the same equivalence class; that is, they achieve the same 
separator $S$. Suppose for a contradiction that $A$ and $B$ desire separators $S_1$ and $S_2$ that 
are incomparable in the tree-order. Then there is a separator $S'$ below or equal to $S$ such that 
$S_1$ and $S_2$ are in different branches of $S'$. So $A$ and $B$ are in different branches of 
$S'$. The separators $S_1$ and $S_2$ witness that $S'$ is not the separator just above $A$ or $B$. 
Hence one of $A$ or $B$ achieves the separator $S'$. -- But only one of them can achieve it as they 
are contained in different branches. This is a contradiction to the assumption that they achieve 
the same separator. 
Hence the separators desired by atoms in an equivalence class forms a chain in the tree-order.
\end{proof}

We say that a tiny vertex \emph{is in} an equivalence class if it is in an atom of that equivalence 
class. 
The \emph{reach} of a tiny atom is reach of the separator it achieves; if it does not achieve 
any 
separator, its reach is infinite. 
The \emph{reach} of a tiny vertex is the reach of the tiny atom containing it.  We denote the reach 
of a vertex $v$ by $\ell_v$ and of an atom $A$ by $\ell_A$. 
\begin{rem}
 The reach of vertices in a molecule is a key quantity in our estimate below of how it would effect 
the number of edges if we deleted the molecule. 
\end{rem}

The next lemma, see \autoref{fig_uni}, will be applied to estimate the numbers $\ell_v$ and 
$\ell_A$.

   \begin{figure} [htpb]   
\begin{center}
   	  \includegraphics[height=4cm]{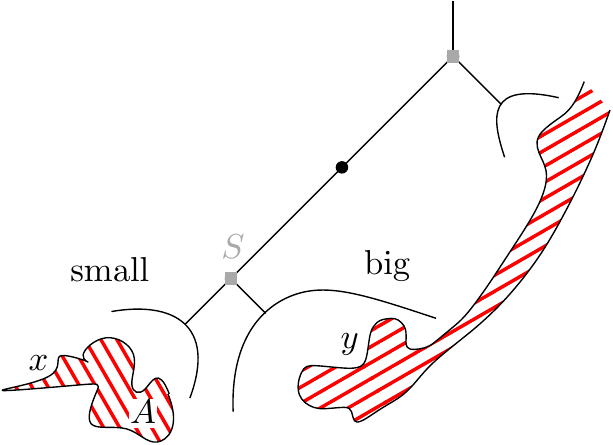}
   	  \caption{The situation of \autoref{double_universal}. The separator $S$ is
depicted in grey. The equivalence class $x$ is contained in its small branch, the 
equivalence class $y$ intersects the big one. }\label{fig_uni}
\end{center}
   \end{figure}
\begin{lem}\label{double_universal}
Let $A$ be a tiny atom of a molecule $M$ with reach $i< \ell_M$. 
 Let $S$ be the separator achieved by $A$. 
  Then all atoms of the equivalence class $x$ of $A$ are contained in the small branch of $S$.
 There is a unique equivalence class $y=y(A)$ that contains atoms of the big branch 
of $S$ and that has reach at least $2i$. Moreover:
  \begin{itemize}
  \item The equivalence class $x$ has at most as many tiny vertices as $y$.
  \item For atoms $A$ and $A'$ in the same molecule $M$ but in different equivalence classes whose 
reach is less than $\ell_M$ but within a factor of two, the equivalence classes $y(A)$ and 
$y(A')$ disagree.
 \end{itemize}
\end{lem}

\begin{proof}
As the atom $A$ achieves the separator $S$, no atom of the other branch can achieve the 
separator $S$. Hence all atoms of the equivalence class $x$ of the atom $A$ are contained in the 
small branch of the separator $S$. As the reach $i$ of the atom $A$ is less than $ \ell_M$, the 
separator $S$ is not final for the molecule $M$ of $A$. Hence by \autoref{normally_balanced2}, the 
separator $S$ is normally balanced. So the big branch $B$ of $S$ must contain an atom desiring $S$. 
Let $y$ be its equivalence class. By \autoref{separator_reach}, the atoms of the equivalence 
class $y$ have reach at least $2i$. 

It follows from the fact that the separator $S$ is normally balanced that the equivalence class $x$ 
has at most as many tiny vertices as $y$.

Now let $A$ and $A'$ be atoms of $M$ in different equivalence classes whose reach is less than 
$\ell_M$ 
but 
within a factor of two. By \autoref{equivi}, the set of separators desired by atoms in the 
equivalence class $y(A)$ forms a chain. By \autoref{normally_balanced1} all separators in this 
chain 
except for the last one are normally balanced. As shown above, the separator $S$ achieved by the 
atom $A$ is a separator of this chain except for the last one. The separator $S'$ achieved by the 
atom $A'$ is not equal to $S$ by definition of equivalence class and it cannot be equal to any 
other separator in this chain as its reach is within a factor of two of the reach of the separator 
$S$. Hence the separator $S'$ desired by an atom of the equivalence class $y(A')$ is not desired by 
any atom in the equivalence class $y(A)$. In particular, the equivalence classes $y(A)$ and $y(A')$ 
are distinct. 
\end{proof}

In a slight abuse of notation, we say that a tiny vertex \emph{is in a molecule} if its tiny atom 
is in that molecule.

\begin{cor}\label{many_desired_vs_better}
In any molecule $M$, amongst its vertices with reach at least $i$, at least 
half of them have reach at least $2i$ for all integers $i$ with $1\leq i \leq \ell_M/2$. 
\end{cor}

\begin{proof}
 By $X[i,2i)$ we denote the set of equivalence classes whose atoms have reach at least $i$ 
but less than $2i$. Similarly, by $X[2i)$ we denote the set of equivalence classes whose atoms 
have reach at least $2i$.

Let $A$ be a tiny atom. Then the equivalence class $y(A)$ defined in \autoref{double_universal} is 
the same for any atom equivalent to the atom $A$. Hence this defines a function from the set 
$X[i,2i)$
of equivalence classes into the set $X[2i)$. By \autoref{double_universal}, this function is 
injective, and the image of every $x\in X[i,2i)$ has at least as many tiny vertices as $x$. 
So the number of tiny vertices of equivalence classes in $X[i,2i)$ is at most the number of tiny 
vertices with equivalence classes in $X[2i)$. This proves the lemma. 
\end{proof}

Given a molecule $M$, we denote by $X_i$ the set of those equivalence classes of $M$ 
the 
reach 
of whose atoms is in the half-open interval $[\frac{\ell_M}{2^{i+1}},\frac{\ell_M}{2^{i}} )$. 
\begin{obs}\label{x01}
 $|X_0|=1$. 
\end{obs}
\begin{proof}
 Any molecule has only one final separator, and all atoms achieving that separator are in the same 
equivalence class. All values of reaches of atoms in a molecule except 
$\ell_M$ are at most half that value.
\end{proof}

\begin{cor}\label{many_desired_xi_prep}
For any molecule $M$, we have $|X_{i+1}|\leq \sum_{j\leq i}|X_j|$ for all integers $i$ with $0\leq 
i \leq 
\log_2(\ell_M)-1$. 
\end{cor}

\begin{proof}
Let $A$ be a tiny atom. Then the equivalence class $y(A)$ defined in \autoref{double_universal} is 
the same for any atom equivalent to the atom $A$. Hence this defines a function from the set $X_i$ 
of equivalence classes into the union of the 
sets $X_j$ with $j\leq i$. By \autoref{double_universal}, this function is injective. This implies 
that $|X_{i+1}|\leq \sum_{j\leq i}|X_j|$.  
\end{proof}

\begin{rem}
 It can be shown that the injection in the above proof is in fact a bijection. We will not use this 
fact. 
\end{rem}

\begin{cor}\label{many_desired_xi}
For any molecule $M$, we have $|X_{i}|\leq 2^{i-1}$ for all integers $i$ with $1\leq i \leq 
\log_2(\ell_M)-1$. 
\end{cor}

\begin{proof}
Recall that $|X_0|=1$ by \autoref{x01}. The statement $|X_{i}|\leq 2^{i-1}$ follows by strong 
induction on $i$ from  
\autoref{many_desired_xi_prep}. The first instance of the induction is that $|X_1|=1$. 
\end{proof}

\autoref{many_desired_vs_better} can be used to prove the following estimate for the sum of the 
reciprocates of the reaches over all vertices in a molecule. 
The \emph{vertex number} $n_M$ of a molecule $M$ is the number of its tiny vertices.

\begin{lem}\label{reach_sum}
For any molecule $M$, the sum $\sum \frac{1}{\ell_v}$ over all vertices in the molecule is 
upper-bounded by 
\[
 \frac{2}{3} \cdot n_M+  \frac{1}{3 \ell_M^2}\cdot n_M
\]

\end{lem}

\begin{proof}
In order to estimate the sum  $\sum \frac{1}{\ell_v}$ we estimate the expected value of 
$\frac{1}{\ell_v}$ where we pick a tiny vertex $v$ uniformly at random from the molecule $M$. 
First we do this by consciously ignoring the assumption in \autoref{many_desired_vs_better} that 
$i$ is upper-bounded by $\ell_M/2$. 
At most half vertices $v$ have reach one, amongst the remaining at most half have reach two, 
amongst the remaining at least half have reach four etc.
So the expected value is upper-bounded by
\[
 E\left (\frac{1}{\ell_v}\right)\leq \frac{1}{2}\left( 1+\frac{1}{4} + \frac{1}{4^2}+...  \right)  
 \]

So the expected value $E(\frac{1}{\ell_v})$ is upper-bounded $\frac{2}{3}$ -- when ignoring the 
bound on $i$ in 
\autoref{many_desired_vs_better}. If we take the bound that $1\leq i \leq \ell_M/2$ into account, 
we 
have to add an error 
term 
as we cannot apply \autoref{many_desired_vs_better} for too large values of $i$.
So now we repeat the upper argument but just assume that all vertices not mentioned in the first 
$\log_2(\ell_M)-1$ steps have reach at least $\ell_M$. The probability mass of them is 
$\frac{1}{2^{y}}$, where we abbreviate $y=\log_2(\ell_M)$. So when we take the bound on $i$ into 
account, we modify the above estimate by replacing the sum of all but the first $y-1$ terms 
by the term $\frac{1}{\ell_M} \cdot \frac{1}{2^{y}}$. 
This gives that $ E(\frac{1}{\ell_v})\leq \frac{2}{3}+X$, where we estimate the error term $X$ as 
follows\footnote{The first term of the infinite sum below is the correct one. Indeed, this infinite 
sum ranges over those summands from the old sum with $\ell_v$ is at least $\ell_M$. 
These are precisely the ones where we can no longer apply \autoref{many_desired_vs_better}.}.
\[
X\leq  \frac{1}{\ell_M} \cdot \frac{1}{2^{y}}\cdot - \frac{1}{2}  \left(   
\frac{1}{4^y} + 
\frac{1}{4^{y+1}}+ ... \right)
\]

This simplifies to: 
\[
 X\leq \frac{1}{3}\cdot \frac{1}{\ell_M^2} 
\]
So we get:
\[
 E\left (\frac{1}{\ell_v}\right)\leq \frac{2}{3} + \frac{1}{3}\cdot \frac{1}{\ell_M^2} 
\]
By multiplying both sides with $n_M$ one gets the statement of the lemma. 
\end{proof}

Given an \astr\ $\widehat P$ with a molecule $M$, then $\widehat P-M$ is obtained from $\widehat P$ 
by deleting all tiny vertices in $M$. By the definition of a molecule $\widehat P-M$ is an \astr\ 
with 
the same choices of big and small branches at the separators.
The \emph{edge number} $e(M)$ of $M$ is defined to be $e(\widehat P)-e(\widehat P-M)$.
Our aim is to prove the following.

\begin{lem}\label{molecule_removal}
For any molecule $M$ of a saturated \astr, its edge number is upper-bounded by 
\[
 e(M)\leq \beta \cdot n_M+  \frac{k^2\cdot \log_2(\ell_M)}{18 \cdot\ell_M^3}
\]
\end{lem}

\begin{rem}
If there is a saturated \astr\ $\widehat P$ with only a single molecule $M$ whose reach $\ell_M$ is 
infinite, 
then \autoref{molecule_removal} combined with \autoref{voll_normal} applied to $\widehat P-M$ 
immediately implies \autoref{abstr_sep_tr} for $\widehat P$. The proof in the general 
case 
will be similar. 
\end{rem}

\begin{proof}[Proof of \autoref{molecule_removal}.]
First we make the simplifying assumption that every tiny vertex is assigned to the separator 
achieved by its atom if existent. Later we will show how the general case can be reduced to this 
special case.
 
Recall that two atoms in a molecule are equivalent if they achieve the same separator. 
Now we will estimate by how much $e(\widehat P)$ changes if we delete all tiny vertices whose atoms 
are 
in a single equivalence class. For now we ignore that the structure after deletion might no longer 
be an \astr\ as the condition for the small and big branches might not be satisfied. This will not 
be a problem at the end as we will delete a whole molecule. Still for the analysis it is easier to 
focus on a single equivalence class.

Now we fix an equivalence class $Y$ and let $A_1,...,A_i$ be its atoms. By $m_i$ we denote the 
number of tiny vertices in $A_i$ and by $m$ we denote the sum of the $m_i$. 

Throughout this proof we will use several times that all separators have vertex number $k$ as 
the \astr\ is saturated by assumption. 
By deleting the tiny vertices of $A_i$ (formally that means, we replace the atom $A_i$ by an atom 
of size $k$, and set in the technical data all terms $m(A_i,\bullet)$ equal to zero.) we lose at 
most $k\cdot m_i+\frac{m_i(m_i-1)}{2}$ edges in the atom $A_i$.
If some (or equivalently: any) atom of the equivalence class does not achieve any separator, we do 
not lose any further edges.
Otherwise, the only other edges we lose are in the separator achieved by the atoms in the 
equivalence class. The free anti-edges after deletion are\footnote{Recall that $\ell_A$ denotes the 
number of normal atoms in the branch $A$. } $\frac{k^2}{2}- 
\frac{k^2}{2\ell_A}$. Before the deletion they are 
$\frac{k^2}{2}-\frac{(k-m)^2}{2\ell_A}-\sum_i\frac{m_i(m_i-1)}{2}-\frac{m}{2}$. 
This term minus the previous one measures by how much the number of free anti-edges goes down after 
deletion. This difference evaluates to:
\[
  \frac{k\cdot m}{\ell_A}  -   \frac{m^2}{2\ell_A} -             \sum_i\frac{m_i(m_i-1)}{2} - 
\frac{m}{2}
\]
As free anti-edges increase the edge number, the total sum of the edges we lose when deleting an 
equivalence class is:
\[
 km+ \frac{k\cdot m}{\ell_A}  -   \frac{m^2}{2\ell_A} - \frac{m}{2}
\]
So this is per tiny vertex $v$: 
\begin{equation}\label{relative}
 k+\frac{k}{\ell_v}- \frac{m}{2\ell_v}-\frac{1}{2}
\end{equation}
  
Now we want to compare these values for different equivalence classes of a molecule. 
So we no longer fix an equivalence class $Y$. 
We recall that the reach $\ell_A$ is constant for all atoms $A$ in an equivalence class $Y$; we 
denote that value by $\ell_Y$ .
The only term in \autoref{relative} except `$\ell_Y$' that depends on 
the equivalence class is `$m$'. Below we will refer to that value by writing `$m_Y$' instead of 
`$m$'.

\begin{sublem}\label{equivalence_classer}
 The sum $Z$ over $\frac{m_Y^2}{\ell_Y}$ over all equivalence classes $Y$ of the molecule $M$ is 
lower-bounded by $\frac{n_M^2}{\ell_M\cdot \log_2(\ell_M)}$.
\end{sublem}
\begin{proof}
We partition the set of equivalence classes of the molecule $M$ into sets $X_i$, where $i$ is an 
integer between zero and 
$\log_2(\ell_M)-1$. We put an equivalence class $Y$ in the set $X_i$ if its reach $\ell_Y$ is in 
the 
half-open interval $[\frac{\ell_M}{2^{i+1}},\frac{\ell_M}{2^{i}} )$. We recall that $|X_0|=1$ by 
\autoref{x01}.

By \autoref{many_desired_xi}, $|X_{i}|\leq 2^{i-1}$ for all $i$ with $1\leq i\leq 
\log_2(\ell_M)-1$. 
Below, slightly generously, we will only use that $|X_{i}|\leq 2^{i}$; but that inequality holds 
additionally for the value $i=0$. 

Now we split the sum over all equivalence classes into separate sums, one for each $X_i$.
We estimate using Cauchy-Schwartz 
\[
 \sum_{Y\in X_i} \frac{m_Y^2}{\ell_Y}\geq \frac{1}{\ell_i\cdot |X_i|} x_i^2;
\]
here we abbreviate $x_i=   \sum_{Y\in X_i} m_Y$ and $\ell_i$ is equal to $\ell_Y$ for all $Y\in 
X_i$. Now we use the fact that $\ell_i\leq 
\frac{\ell_M}{2^{i}}$ and $|X_i|\leq 2^i$:. So:
\[
 \sum_{Y\in X_i} \frac{m_Y^2}{\ell_Y}\geq \frac{1}{\ell_M} x_i^2;
\]

So the sum $Z$ is lower-bounded by
\[
 Z\geq \sum_{i=0}^{\log_2(\ell_M)-1}   \frac{1}{\ell_M} x_i^2
\]
We apply Cauchy-Schwarz again and use the fact that the sum of the $x_i$ is equal to the 
number $n_M$ of tiny vertices of the molecule $M$ to deduce that
\begin{equation}\label{eqZ}
  Z\geq \frac{n_M^2}{\ell_M \cdot \log_2(\ell_M)}
\end{equation}

\end{proof}

Now we combine \autoref{equivalence_classer} with \autoref{reach_sum} to estimate via
\autoref{relative} the number $e(M)$ of edges we lose when we delete the whole 
molecule\footnote{The second summand comes from the fact that in \autoref{relative} the last 
summand is minus a half.}. 
\begin{equation}\label{mole}
 e(M)\leq\frac{5}{3} k \cdot n_M-\frac{1}{2} n_M+  \frac{k}{3 \ell_M^2}\cdot n_M- 
\frac{n_M^2}{2\ell_M\cdot 
\log_2(\ell_M)}
\end{equation}
The sum of the first two terms on the right hand side is equal to $\beta \cdot 
n_M$. 
We abbreviate the right hand side without these two summands by $D$. It remains to show that 
$D$ 
is at most the last term appearing in the statement of this lemma. For that we consider 
$D$ as a function of $n_M$ and compute the first 
derivative. This yields that $D$ is maximal if $n_M= \frac{k \cdot \log_2(\ell_M)}{3 
\cdot \ell_M}$. In this case $D$ evaluates to:
\begin{equation}\label{Deq} 
 D\leq \frac{k}{6 \ell_M^2}\cdot n_M = \frac{k^2\cdot \log_2(\ell_M)}{18 \cdot\ell_M^3}
\end{equation}

 This completes the proof in the special case that every tiny vertex is assigned to the separator 
achieved by its atom. Next we reduce the general case to this special case. 

 \vspace{.3cm}
 
If a vertex $v$ is assigned to a separator $S'$ different from the separator $S$ achieved by 
its atom that separator must be above $S$ in the \astr. As the vertex $v$ is in the small branch of 
the separator $S$, the 
number of normal atoms below the small branch of $S'$ is at least two times the reach $\ell_v$ of 
$v$. 

For a tiny vertex $v$ of the molecule $M$, we denote by $\bar \ell_v$ the number of normal atoms in 
the small branch of the separator $v$ 
is assigned to; if $v$ is not assigned to any separator this number is infinite. We have shown that 
if $\bar \ell_v$ is not equal to the reach $\ell_v$, then it is at least twice as big.

We denote by $W$ the set of vertices $v$, where the reach $\ell_v$ is different from $\bar 
\ell_v$. Each vertex $v$ in $W$ makes the first term of \autoref{mole} smaller by at least 
$\frac{k}{\ell_v}- \frac{k}{\overline{\ell_v}}\geq \frac{1}{2}\frac{k}{\ell_v}$, which we 
generously 
estimate by $\frac{1}{2}\frac{k}{\ell_M}$. Moreover, the fact that the vertex $v$ is 
assigned to a different separator does not affect the second term but it may influence the last 
term. 

We refer to the number $\frac{1}{2}\frac{k}{\ell_M}$ as the \emph{$W$-defect}.
It remains to show that the influence on the last term is at most the $W$-defect per 
vertex in $W$. The average contribution of a 
vertex of $M$ to the last term is:
\[
\frac{n_M}{2\ell_M\cdot \log_2(\ell_M)}
\]
First, as a heuristic, we note that if we plug in the value of $n_M$ computed above, this is much 
less than the $W$-defect. This heuristic suggests that it is very 
likely that $e(M)$ is small if the set $W$ is large. 

Let us now be precise. 
We argue as above in the estimate for the last term and get the same bound except that we 
replace `$n_M$' by `$(n_M-w)$', where $w=|W|$. So we get the same inequality as in \autoref{mole} 
but with the term $D$ replaced by the following.
\[
D'=  \frac{k}{3 \ell_M^2}\cdot n_M -  \frac{(n_M-w)^2}{2\ell_M\cdot 
\log_2(\ell_M)} - \frac{1}{2}\frac{k}{\ell_M}\cdot w
\]
We consider $D$ and $D'$ as functions of the variable $n_M$. Comparing the first and last term of 
$D'$ yields that $D'(n_M)\leq D(n_M-w)$. Hence the above estimate for $D$, \autoref{Deq}, also 
holds with $D'$ in place of $D$. So 
we get the same estimate in the general case.  
\end{proof}

We summarise this section in the following. 

\begin{proof}[Proof of \autoref{abstr_sep_tr}.]
Let $G$ be a graph with $n\geq 2k+1$ vertices without a $(k+1)$-connected subgraph with more than 
$2k$ vertices. By \autoref{sep--tr_exist}, the graph $G$ has a separator-tree $T$. Let 
$\widehat T'$ be its 
associated framework (as defined above \autoref{rem43}), and let $\widehat T$ be its 
associated \astr\ (as defined above \autoref{eg44}). 
By \autoref{eg44} the edge numbers of $\widehat T'$ and $\widehat T$ agree; here we will show that 
the later one is bounded by the bound given in \autoref{abstr_sep_tr}. 
Let $\widetilde T$ be a saturation of 
$\widehat 
T$. By \autoref{is_astr}, $\widetilde T$ is a saturated \astr. 

\begin{sublem}\label{dege_case}
 If $\widetilde T$ has no normal atom, then 
 \[
  e(\widehat T)\leq \beta \cdot (n-k)+ \frac{\gamma}{L}-\epsilon
 \]
\end{sublem}

\begin{proof}
By \autoref{normal_leq}, the \astr\ $\widehat T$ has no normal atom; so $L=1$. 
We denote the vertex number of $\widetilde T$ by $\tilde n$. 
As the \astr\ $\widetilde T$ is saturated but has no normal atom, it has only one node. 
So the edge number of $\widetilde T$ is $\frac{\tilde n(\tilde n-1)}{2}$. 
And the vertex number $\tilde n$ is not more than $k+\frac{k}{3}$. 
By \autoref{saturn1} we get that the edge number $e(\widehat T)$ is upper-bounded by
$e(\widetilde T)+k (n-\tilde n)$. So it remains to show the following.
\[
 \frac{\tilde n(\tilde n-1)}{2}+ k (n-\tilde n)\leq \beta \cdot (n-k)+\gamma-\epsilon
\]
By considering the first derivative, we conclude that the left hand side takes its minimum at 
$\tilde n=k+\frac{1}{2}$. As it is a quadratic polynomial in $\tilde n$, it is symmetric around 
that value and takes its maximum at the boundary of the interval. As the boundary point one is 
farer away from $k+\frac{1}{2}$ than $k+\frac{k}{3}$, it takes its maximum at one (if $k>1$; and 
if $k=1$ the integer $\tilde n$ must be one as the upper-bound is less than two). So it suffices to 
check this 
inequality for $\tilde n=1$. We also note that the coefficient in front of $n$ is larger on the 
right than on the left. Hence we may assume that $n$ is minimal. That is, $n=2k+1$. 
Then it evaluates to:
\[
 k \cdot 2k\leq \beta \cdot (k+1)+\gamma-\epsilon
\]
Plugging in the values for $\beta$ and $\gamma$ and $\epsilon$ proves this inequality. 
\end{proof}

We obtain $T'$ from  $\widetilde T$ by deleting all tiny vertices in molecules. By the 
definition of 
molecules, $T'$ is an \astr. Moreover all of its atoms are normal. 
By \autoref{dege_case} we may assume that the tree $\widetilde T$ has at least one normal atom, 
thus the tree $T'$ is 
nonempty. 
Hence we can apply 
\autoref{voll_normal} to the abstract separator-tree $T'$. So the edge 
number $e(T')$ is 
upper-bounded by $\beta 
\cdot (n(T')-k)+ \frac{\gamma}{\widetilde L}-\epsilon$ 
minus $\gamma$ times 
the branching error sum of $\widehat T$; here $\widetilde L$ denotes the number of normal atoms of 
$T'$, which is 
equal to the number of normal atoms of $\widetilde T$.

 The next step is to extend that estimate from the \astr\ $T'$ to the \astr\ $\widetilde T$ using 
\autoref{molecule_removal}. 
For this to work we have to show that the molecule error term, this is the last 
summand in the inequality of that lemma, is compensated by some branching error term. Indeed, we 
will show that the branching error term at the final separator of a molecule $M$ is never smaller 
than the molecule error term of $M$. By \autoref{final_not_normal} the final separator of a 
molecule is not normally balanced. So by \autoref{calci} for that it suffices to show that 
\[
 \frac{k^2}{3} \cdot \frac{1}{4\ell_M^2}  \geq \frac{k^2\cdot \log_2(\ell_M)}{18 
\cdot\ell_M^3};
\]
which is obviously true as $\ell_M\geq 1$. 

Since by \autoref{final_distinct} no two molecules have the same final separator, we can indeed 
compensate the sum of all molecule error terms by the branching error sum. Hence the edge 
number $e(\widetilde T)$ is 
upper-bounded by $\beta \cdot (n_{\widetilde T}-k)+ \frac{\gamma}{\widetilde L}-\epsilon$. 

Finally by \autoref{saturn1} the edge number $e(\widehat T)$ is upper-bounded by \newline {$\beta 
\cdot 
(n-k)+ \frac{\gamma}{\widetilde L}-\epsilon$}. By \autoref{normal_leq}, the number $\widetilde L$ 
of 
normal atoms of 
$\widetilde T$ is at least the number of normal atoms of $\widehat T$; and $\widetilde L$ is at 
least one in this case.
This completes the proof. 
 \end{proof}

At the beginning of \autoref{sec:normal_atoms} we have shown that 
\autoref{abstr_sep_tr} implies 
\autoref{main-intro}. So this completes the proof of \autoref{main-intro}.

\section{Extremal constructions and lower order terms}\label{extremal}

In this section, we construct a family of graphs that shows that the constant in 
\autoref{main-intro} is best-possible. We also state \autoref{detailed} below which is an 
improvement of \autoref{main-intro} in lower order terms. The bounds in this conjecture are 
attained 
by the constructed family. 

\begin{eg}\label{our_extremal}
For powers $2^i$ of two we build graphs $G_i$ that have separator-trees with $2^i$ normal 
atoms that attains the bounds in \autoref{voll_normal} and hence in \autoref{abstr_sep_tr}.  
For $i=0$, the graph $G_0$ is a complete graph on $2k$ vertices. 
We obtain $G_{i+1}$ from two disjoint copies $A$ and $B$ of $G_i$ as follows.
We pick a separator $S$ of size $k$ in each of $A$ and $B$ and glue the two graphs $A$ and $B$ at 
that separator. We pick the separator $S$ in $A$ so that it takes $\frac{k}{2^i}$ vertices from 
each of the $2^i$ atoms of $A$. These vertices can be picked disjoint from all previously picked 
separators as each atom has $2k$ vertices. For $B$ we do the same. It is straightforward 
to check that the graphs $G_i$ for $2^i\leq k$ attain equality in all inequalities in the proof of 
\autoref{voll_normal}. 
\end{eg}

\begin{rem}
For $m> 2k$, one could do the same construction, where $G_0$ is a complete graph on $m$ 
vertices. Similarly like \autoref{abstr_sep_tr} one can 
show that these graphs are edge-maximum without a $(k+1)$-connected subgraph with more than $m$ 
vertices amongst all graphs with the same number of vertices.
\end{rem}

\begin{eg}\label{our_extremal_huge}
The construction of \autoref{our_extremal} only makes sense if $2^i\leq k$. 
Here we continue this construction for large values of $i$. 
 For $k=2^i$ the graph $G^*=G_i$ has an independent set of $k$ vertices. As mentioned in 
\autoref{our_extremal}, its edge number is $\beta \cdot (n-k)+ 
\frac{\gamma}{L}-\epsilon$. Plugging in $n=k^2+k$ and $L=k$ yields:
\[
 e(G^*)=  \left(  \frac{5}{3}k-\frac{1}{2}-\frac{1}{6k}\right) (n-k)
\]
We construct larger graphs by gluing together several copies of the graph $G^*$ at the 
independent set of size $k$. So by \autoref{separation_edges} the edge number of these graphs is 
also described by the linear formula above. 
\end{eg}

We expect very much that the construction given in \autoref{our_extremal_huge} is 
extremal. More precisely, we are convinced that the methods of this paper also give a proof of 
the following (with a slightly more careful analysis). 
However, the details are technical and we will not give them here. 

\begin{con}\label{detailed}
Let $k\geq 1$ be a natural number. Every graph $G$ with $n\geq k^2+k$ vertices that has more 
edges than 
$\beta'' \cdot (n-k)$ has a $(k+1)$-connected subgraph 
with more than $2k$ vertices; here $\beta''=\frac{5}{3}k-\frac{1}{2}-\frac{1}{6k}$.
\end{con}

\section{Concluding remarks}\label{concl_rem}

We conclude with open problems and some remarks on Mader's conjecture. 
\begin{que}
Can you extend \autoref{main-intro} to hypergraph?
\end{que}
A notion that is similar to $(k+1)$-connected subgraphs is that of $(k+1)$-blocks. We refer the 
reader to \cite{zbMATH06427103}, where the question is asked which average degree forces a 
$(k+1)$-block.

\vspace{0.3cm}

In 1979 Mader conjectured that the following family is asymptotically extremal without a 
$(k+1)$-connected subgraph \cite{Mader_survey}. This family is obtained from an 
independent set $I$ of $k$ vertices by 
adding an arbitrary number of disjoint 
complete graphs on $k$ vertices completely to the vertex set $I$. 
The edge number of these graphs is $\frac{n-k}{k} \cdot \left(\frac{k(k-1)}{2} + k^2     \right)$.
This evaluates to $\frac{3k-1}{2}(n-k)$.

It is fairly easy to deduce from \autoref{abstr_sep_tr} Mader's conjecture  in the special case 
of graphs that have separator-trees with disjoint separators all of whose atoms have $2k$ vertices.
\begin{que}
Can you use the methods of this paper to improve the bound of Bernshteyn and Kostochka
on Mader's conjecture?
\end{que}

\vspace{.3cm}

The following construction has asymptotically the same number of edges as Mader's construction but 
has more edges for small graphs. 

\begin{eg}\label{mader_extremal}
This construction is similar to \autoref{our_extremal}  but we start with a different graph than 
$G_0$. Given a number $k$ that is a power of two, we define the graph $H_0$ to be the complement 
on $2k$ vertices of the disjoint union of the complete bipartite graphs $K_{m,m}$, where 
$m=\frac{k}{2^j}$ and $1\leq j\leq \log_2(k)$. 
 We obtain the graph $H_{i+1}$ from two disjoint copies of the graph $H_i$ by gluing 
them together at a separator $S_i$ of $k$ vertices. Each graph $H_i$ is built up from $2^i$ copies 
of the graph $H_0$. In the graph $H_i$ the vertex sets of the graphs $K_{m,m}$ with 
$m=\frac{k}{2^{i+1}}$ are disjoint. We take their union to be the vertex set of the separator 
$S_i$. Clearly this union has $k$ vertices. 
The vertex number $n_i$ of the graph $H_i$ is $k+k \cdot 2^i$. 

Now we compute the edge number of the graph $H_i$. 
The edge number $e(H_0)$ of the graph $H_0$ is \[
             e(H_0)=      \frac{2k (2k-1)}{2} - \left(\frac{k}{2}\right)^2 -    
\left(\frac{k}{4}\right)^2 -...  - \left(\frac{k}{k}\right)^2              
                                      \] 
This evaluates to:
\[
 e(H_0)=\frac{5}{3} k^2 -k+\frac{1}{3}
\]
The edge number $e(S_i)$ of the separator $S_i$ is: 
\[
 e(S_i)= 2^i\cdot 2\cdot \frac{\frac{k}{2^{i+1}}\cdot \left(\frac{k}{2^{i+1}}-1\right)}{2}
\]
This evaluates to:
\[
 e(S_i)= \frac{k^2}{2^{i+2}}-\frac{k}{2}
\]
We shall prove inductively that
\begin{equation}\label{recursive}
  e(H_i)= \frac{3k-1}{2} (n_i-k) +\frac{n_i-k}{3k}+\frac{k^2}{3}\cdot 
\frac{k}{2(n_i-k)} -\frac{k}{2}
\end{equation}

The induction start is that $i=0$. \autoref{separation_edges} yields the recursive 
formula $e(H_{i+1})=2e(H_i)-e(S_i)$. This we use in the induction step:
\[
e(H_{i+1}) =  \frac{3k-1}{2} (n_{i+1}-k)+\frac{n_{i+1}-k}{3k}+ 2 \frac{k^2}{3}\cdot 
\frac{k}{2(n_i-k)} -k - \frac{k^3}{4(n_i-k)}+\frac{k}{2}
\]
It is straightforward to check that the right hand side of this equation is identical to the right 
hand side of \autoref{recursive} for the index $i+1$ in place of the index $i$. This completes the 
induction step. 

Finally, the graph $H_i$ for $i=\log_2(k)$ has $n_i=k+k^2$ vertices and  $\frac{3k-1}{2}(n-k)$ 
edges by \autoref{recursive}. This graph has an independent vertex set of size $k$. Hence we can 
glue any number of copies together at the set $I$ and still have the edge number 
$\frac{3k-1}{2}(n-k)$.
\end{eg}

We believe that the point where Mader's family and the family $H_i$ have the same edge number is 
the point where Mader's family begins to be best possible (together with other constructions such 
as the one described in the last line of \autoref{mader_extremal}). 
Before that point we believe that the family $H_i$ is best possible. 
And we believe that the bound for the graphs $H_i$ gives the correct bound for small graphs. 
For very small graphs with at most $2k$ vertices, it is easy to see that constructions like that of 
the graph 
$H_0$ above give the optimal bound, hence we exclude this easy case below. 
We summarise this in the following conjecture. 

\begin{con}\label{precise_mader}
Let $G$ be a graph with $n$ vertices and $e(G)$ edges and without a 
$(k+1)$-connected 
subgraph. 
\begin{itemize}
 \item If $n\geq k^2+k$, then $e(G)\leq \frac{3k-1}{2} 
(n-k)$;
\item If $2k \leq n \leq k^2+k$, then $
                                      e(G)\leq\frac{3k-1}{2} 
(n-k) +\frac{n-k}{3k}+\frac{k^2}{3}\cdot 
\frac{k}{2(n-k)} -\frac{k}{2} 
                                      $.
\end{itemize}
\end{con}
 
As shown in \autoref{mader_extremal}, the graphs $H_i$ attain the bounds in 
\autoref{precise_mader}.

\bibliographystyle{plain}
\bibliography{literatur}

\end{document}